\newtheorem{theorem}{Theorem}[section]
\newtheorem{corollary}[theorem]{Corollary}
\newtheorem{lemma}[theorem]{Lemma}
\newtheorem{observation}[theorem]{Observation}
\newtheorem{fact}[theorem]{Fact}
\newtheorem*{theorem*}{Theorem} 
\theoremstyle{definition}
\newtheorem{definition}[theorem]{Definition}
\theoremstyle{remark}
\newtheorem{remark}[theorem]{Remark}
\newtheorem*{claim}{Main Claim}
\newtheorem{claimno}{Claim}
\setlist[description]{leftmargin=5.5em,labelindent=\parindent}
\renewcommand{\P}{\mathbb{P}}
\newcommand{\M}{\mathcal{M}}
\newcommand{\D}{\mathbb{D}}
\newcommand{\N}{{\overline{N}}}
\newcommand{\G}{\overline{G}}
\renewcommand{\S}{{\overline{S}}}
\newcommand{\U}{\mathcal{U}}
\newcommand{\ZFC}{\textup{\ensuremath{\textsf{ZFC}}}}
\newcommand{\CH}{\textup{\textsf{CH}}}
\newcommand{\Ord}{\textup{\ensuremath{\text{Ord}}}}
\newcommand{\id}{\textup{\ensuremath{\text{id}}}}
\DeclareMathOperator{\height}{height}
\DeclareMathOperator{\ran}{range}
\DeclareMathOperator{\cp}{cp}
\DeclareMathOperator{\dom}{dom}
\DeclareMathOperator{\wfc}{wfc}
\DeclareMathOperator*{\bigdoublevee}{\bigvee\mkern-15mu\bigvee}
\newcommand{\To}{\longrightarrow}
\newcommand{\st}{\; | \;}
\newcommand{\set}[2]{\left\{#1\st #2 \right\}}
\newcommand{\seq}[2]{\langle #1 \st #2 \rangle}
\newcommand{\forces}{\Vdash}
\newcommand{\rest}{\mathbin{\upharpoonright}}
\newcommand{\SH}{\mathcal{H}\textit{ull} \,}
\newcommand{\sk}[3]{\SH^{#1}( {#2} \cup {\ran(#3)} ) }
\newcommand{\Sk}[3]{\SH^{#1}( {#2} \cup {#3} ) }
\begin{document}
\title{The subcompleteness of diagonal Prikry forcing}
\author{Kaethe Minden}
 \address[K.~Minden]{Mathematics, Marlboro College, 2582 South Road, Marlboro, VT 05344}
\email{kminden@marlboro.edu}
\urladdr{https://kaetheminden.wordpress.com/}
\date{}     					
\thanks{The material presented here is based on the author's doctoral thesis \cite{Minden:2017fr}, written at the Graduate Center of CUNY under the supervision of Gunter Fuchs.}

\subjclass[2010]{03E40, 03E55}
\keywords{subcomplete forcing, generalized Prikry forcing, iterated forcing, large cardinals}

\begin{abstract}
Let \(D\) be an infinite discrete set of measurable cardinals. It is shown that generalized Prikry forcing to add a countable sequence to each cardinal in \(D\) is subcomplete. To do this it is shown that a simplified version of generalized Prikry forcing which adds a point below each cardinal in \(D\), called generalized diagonal Prikry forcing, is subcomplete. Moreover, the generalized diagonal Prikry forcing associated to \(D\) is subcomplete above $\mu$, where $\mu$ is any regular cardinal below the first limit point of \(D\).
\end{abstract}
\maketitle
\section{Introduction}
Subcomplete forcing notions are a family of forcing notions that do not add reals and may be iterated using revised countable support. Examples of subcomplete forcing include all countably closed forcing, Prikry forcing, and Namba forcing (under $\CH$), as shown by Ronald B.~Jensen~\cite[Section 3.3]{Jensen:2014}. It is clear from these examples that not all subcomplete forcing notions are proper, and, conversely, nontrivial $ccc$ forcing notions are never subcomplete \cite{Minden:2017fr}. For more on subcomplete forcing and its characteristics, refer to Jensen (primarily \cite{Jensen:2014} and \cite{Jensen:2009fe}), and the author's doctoral thesis \cite{Minden:2017fr}.

Recently, Fuchs~\cite{Fuchs:2017Magidor} has shown that Magidor forcing is also subcomplete. Here an adaptation of Jensen's proof showing that Prikry forcing is subcomplete is employed to see that many Prikry-like forcing notions, in particular those which will be referred to here as generalized diagonal Prikry forcing, following Fuchs~\cite{Fuchs:2005kx}, are subcomplete.

In section~\ref{sec:preliminaries}, some preliminary topics are introduced that may be found in Jensen's lecture notes from the 2012 AII Summer School in Singapore (for the published version refer to \cite{Jensen:2014}, here the version posted on Jensen's website is what is referenced). The notion of the weight of a forcing and fullness of a structure are then introduced, which lead to the definition of subcompleteness given in section~\ref{subsec:subcomplete} and subcompleteness above $\mu$ in section~\ref{subsec:subcompleteabovemu}. Important techniques of Barwise theory are introduced in section~\ref{subsec:BarwiseTheory}, and the ultrapower-like notion of a liftup, to obtain full structures, is introduced in section~\ref{subsec:Liftups}. In section~\ref{sec:GenDiagonalPrikryForcing} the definition of generalized diagonal Prikry foring is given, and it is shown that such forcing notions are subcomplete. Moreover, it is shown that diagonal Prikry forcing is subcomplete above $\mu$ where $\mu$ is a regular cardinal below the first limit point of the infinite discrete set of measurable cardinals used in the forcing.

\section{Preliminaries} \label{sec:preliminaries}
Before defining subcompleteness and generalized diagonal Prikry forcing, some preliminary information is necessary.

Forcing notions $\P = \langle \P, \leq \rangle$ are taken to be partially ordered sets that are separative and contain a ``top" element weaker than all elements of $\P$, denoted $\mathbbm 1$.
Use $N$, M, $\N$ (potentially with subscripts) to always denote transitive models of $\ZFC^-$, the axioms of Zermelo-Fraenkel Set Theory without the axiom of \textsf{Powerset}, and with the axiom of \textsf{Collection}\footnote{ \textsf{Collection} is the following schema: $\forall \vec{w}\, [\forall x \, \exists y \; \varphi(x, y, \vec{w}) \implies \forall A \exists B \, \forall x \in A \,\exists y \in B \; \varphi(x, y, \vec w) ]$.} instead of \textsf{Replacement}. For $\sigma$ an elementary embedding from $M$ to $N$, write $\sigma: M \prec N$ and in case $\sigma$ is the identity, write $M \preccurlyeq N$ to denote and emphasize that $M$ is an elementary substructure of $N$. 
If a map $\sigma$ satisfies $\sigma(\overline a)=a$ and $\sigma(\overline b)=b$, this will be abbreviated as $\sigma(\overline a,\overline b)=a,b$.
Write $\height(N)$ to mean $\Ord \cap N$ and $\alpha^N$ for $\alpha \cap N$ for ordinal $\alpha$.
If $A$ is a set of ordinals, then $\lim(A)$ is the set of limit points in $A$.  
For $\theta$ a cardinal, the collection of sets hereditarily of size less than $\theta$ will be referred to as $H_\theta$. Relativizing the concept to a particular model of set theory, $M$, write $H_\theta^M$ to mean the collection of sets in $M$ that are hereditarily of size less than $\theta$ in $M$. In this case, if $\theta$ is determined by some computation, the computation is meant to take place in $M$.
For a cardinal $\tau$, with abuse of notation, write $L_\tau[A]$ to refer to the structure $\langle L_\tau[A]; \in, A \cap L_\tau[A] \rangle$.

\subsection{The weight of a forcing notion}
\label{subsec:delta}
Jensen defines what shall be called the weight of a Boolean algebra \cite[Section 3.1 p.~31]{Jensen:2014}, but the concept is equally applicable to any poset.

\begin{definition} For a poset $\P$, the \emph{weight} of \(\P\) is the least cardinality $\delta(\P)$ of a dense subset of $\P$. 
\end{definition}

As Jensen states, for forcing notions $\P$, the weight can be replaced with the cardinality of $\P$, or even $\P$, for the purpose of defining subcompleteness. However, $\delta(\P)$ and $|\P|$ are not necessarily the same, since there could be a large set of points in the poset that all have a common strengthening. 


\subsection{Fullness}
\label{subsec:fullness}
Suppose that $\P$ is a forcing notion and $s$ is a set, with $\P, s \in H_\theta$, where $\theta$ is sufficiently large. Instead of just working with $H_\theta$ and its well order (as is done for the definition of proper forcing), the models used for defining subcompleteness will have the form $L_\tau[A] \supseteq H_\theta$ where $L_\tau[A] \models \ZFC^-$ such that $\tau>\theta$ is a (possibly singular) cardinal, and $A \subseteq \tau$. Typically, the convention is to set $N=L_\tau[A]$ for brevity. Rather than spell this out every time, write $N$ is a \emph{\textbf{suitable model}} for $\P,s$ above $\theta$.
Such $H_\theta$ will need to be large enough so that $N$ has the correct $\omega_1$ and $H_{\omega_1}$. One reason for working with these models is that such $N$ will naturally contain a well order of $H_\theta$, along with its Skolem functions. Additionally a benefit of working with such models is that $L_\tau[A]$ is easily definable in $L_\tau[A][G]$, if $G$ is generic, using $A$.

Our notation mostly follows Jensen but may be somewhat idiosyncratic. If $X \preccurlyeq N$ is a countable elementary substructure, we can take its transitive collapse and write $\N \cong X$. This gives rise to an elementary embedding 
$\sigma: \N \cong X \preccurlyeq N.$
Often we suppress the mention of the range of $\sigma$ and just write $\sigma: \N \prec N$.
In fact, it will not be quite enough for such an $\N$ to be transitive. We will want to be in a
situation where there is more than one elementary embedding of \(\N\) into \(N\).
This will be end up being possible if we require that the model \(\N\) is \emph{full}.

Note that, given $\sigma: \N \prec N$ where $\N$ is countable and transitive, the critical point $\cp(\sigma)$ is exactly $\omega_1^{\N} = \sigma^{-1}(\omega_1)$.

\begin{fact} \label{fact:CPofourEmbeddings} Suppose $\sigma: \N \prec N$ with $\N$ countable and $H_{\omega_1} \subseteq N$. Then $\cp(\sigma)=\omega_1^{\N}$ and $\sigma \rest H_{\omega_1}^{\N} = \id$. \end{fact}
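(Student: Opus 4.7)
The plan is to prove both conclusions of the fact simultaneously by showing that $\sigma$ is the identity on $H_{\omega_1}^{\N}$ and then identifying $\omega_1^{\N}$ as the first ordinal moved.

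The crucial preliminary observation is that since $\N$ is countable and transitive, every ordinal $\alpha < \omega_1^{\N}$ is countable in $V$, and in fact every $x \in H_{\omega_1}^{\N}$ is truly hereditarily countable: if $f \in \N$ witnesses that $\TC(\{x\})$ is countable in $\N$, then $f$ really is a surjection from $\omega$ onto $\TC(\{x\})$ since $\N$ is transitive. Hence $H_{\omega_1}^{\N} \subseteq H_{\omega_1} \subseteq N$, so both $x$ and $\sigma(x)$ live in $N$ and it makes sense to compare them. Moreover, since $H_{\omega_1} \subseteq N$, $N$ computes $\omega_1$ correctly, i.e., $\omega_1^N = \omega_1$.

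Next I would show $\sigma(x) = x$ for every $x \in H_{\omega_1}^{\N}$ by induction on the rank of $x$ (equivalently, $\in$-induction). Assuming $\sigma(y) = y$ for all $y \in x$, one direction $x \subseteq \sigma(x)$ is automatic from $\sigma[x] = x$. For the reverse direction, pick a bijection $f \colon \omega \to x$ in $\N$; by elementarity $\sigma(f) \colon \sigma(\omega) \to \sigma(x)$ is a bijection in $N$, and $\sigma(\omega) = \omega$ since $\omega$ is definable. For each $n < \omega$, applying elementarity to the formula ``$f(n) = f(n)$'' yields $\sigma(f)(n) = \sigma(f(n)) = f(n)$ by the induction hypothesis (using that $\sigma$ fixes each natural number, an immediate consequence of the inductive argument applied at finite ranks). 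Thus $\sigma(f) = f$ as functions with domain $\omega$, and so $\sigma(x) = \ran(\sigma(f)) = \ran(f) = x$. In particular $\sigma \rest H_{\omega_1}^{\N} = \id$, which gives the second conclusion, and specializing to ordinals yields $\sigma \rest \omega_1^{\N} = \id$.

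Finally I would compute $\sigma(\omega_1^{\N})$ to locate the critical point. By elementarity, $N$ thinks $\sigma(\omega_1^{\N})$ is its own $\omega_1$, so $\sigma(\omega_1^{\N}) = \omega_1^N = \omega_1$. But $\omega_1^{\N}$ is a countable ordinal of $V$ because $\N$ is countable, so $\omega_1^{\N} < \omega_1 = \sigma(\omega_1^{\N})$. Combined with $\sigma \rest \omega_1^{\N} = \id$, this shows $\cp(\sigma) = \omega_1^{\N}$. There is no serious obstacle; the only point requiring care is the step where one concludes $\sigma(f) = f$, which needs the observation that both functions have domain exactly $\omega$ so that the pointwise agreement on $\omega$ implies literal equality — this is what lets the rank induction close.
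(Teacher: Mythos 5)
Your proof is correct; the paper states Fact~\ref{fact:CPofourEmbeddings} without proof, and your argument --- $\in$-induction on $x \in H_{\omega_1}^{\N}$ using a counting of $x$ inside $\N$, absoluteness of ``$f$ is a surjection/function with domain $\omega$'' for the transitive models $\N$ and $N$, and then $\sigma(\omega_1^{\N}) = \omega_1^{N} = \omega_1 > \omega_1^{\N}$ (the last inequality because $\N$ is countable, the middle equality because $H_{\omega_1} \subseteq N$) --- is exactly the standard argument being implicitly invoked. The only small repair: a finite nonempty $x$ admits no bijection $f \colon \omega \to x$, so either use a surjection or note directly that $\sigma(x) \supseteq \sigma``x = x$ while $|\sigma(x)| = |x|$ is finite; this is the harmless point you already gesture at when you mention the finite ranks.
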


Let $N = L_\tau[A]$, for some cardinal $\tau$ and set $A$, let $X$ be a set, and let $\delta$ be a cardinal. Our notation for the \emph{Skolem hull} of $\delta \cup X$ in $N$ is $\Sk{N}{\delta}{X}$. It is defined to be the smallest $Y \preccurlyeq N$ satisfying $X \cup \delta \subseteq Y$.

Toward defining fullness, the notion of regularity of transitive models needs to be defined.

\begin{definition} We say that $N$ is \emph{regular} in $M$ so long as $N \subseteq M$ and for all functions $f: x \To N$, where $x$ is an element of $N$ and $f \in M$, we have that $f``x \in N$. \end{definition}
The following lemma is meant to elucidate the significance of regularity as a kind of second-order
replacement scheme.

\begin{lemma} \label{lemma:regularityequiv}
$N$ is regular in $M$ if and only if $N = H_\gamma^M$, where $\gamma = \height(N)$ is a regular cardinal in $M$. 
\end{lemma}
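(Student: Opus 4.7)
The plan is to handle the two implications separately. For ($\Leftarrow$), assume $N=H_\gamma^M$ with $\gamma=\height(N)$ regular in $M$. Given $f\colon x\to N$ with $x\in N$ and $f\in M$, note that in $M$ each $z\in f``x$ lies in $H_\gamma^M$ and hence satisfies $|\TC{z}|^M<\gamma$, while $|f``x|^M\leq|x|^M<\gamma$ because $x\in H_\gamma^M$. Regularity of $\gamma$ in $M$ then makes $|\TC{f``x}|^M<\gamma$, so $f``x\in H_\gamma^M=N$.

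For ($\Rightarrow$), set $\gamma=\height(N)$. I would first show $\gamma$ is a regular cardinal in $M$. If some surjection $g\colon\alpha\to\gamma$ with $\alpha<\gamma$ lived in $M$, then $g``\alpha\subseteq\gamma\subseteq N$ by transitivity, so regularity would force $\gamma=g``\alpha\in N$, contradicting $\gamma=\Ord\cap N$; the same idea applied to a cofinal map rules out $\cof^M(\gamma)<\gamma$, since the supremum of its range, computed in $N\models\ZFC^-$, would then be $\gamma$ itself.

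It remains to establish $N=H_\gamma^M$. The inclusion $N\subseteq H_\gamma^M$ follows because, for $y\in N$, the axiom of choice inside $N$ yields a bijection from some ordinal $\alpha<\gamma$ onto $\TC{y}$, witnessing $|\TC{y}|^M\leq\alpha<\gamma$. For the reverse inclusion, I plan to induct on rank: given $y\in H_\gamma^M$, every $z\in y$ satisfies $\TC{z}\subseteq\TC{y}$, hence $z\in H_\gamma^M$, and by induction $z\in N$; so $y\subseteq N$. Picking in $M$ a bijection $f\colon\alpha\to y$ with $\alpha<\gamma$, we have $\alpha\in N$, $f\in M$, and $f``\alpha=y\subseteq N$, so regularity of $N$ in $M$ gives $y\in N$.

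The main obstacle I anticipate is this final inductive step, where one must simultaneously invoke the inductive hypothesis to guarantee $y\subseteq N$, use that $\gamma$ is a cardinal in $M$ to obtain a small enough indexing of $y$ by an ordinal already in $N$, and then apply the regularity hypothesis to collect $y$ itself into $N$; the preliminary facts that $\gamma$ is a regular cardinal in $M$ are precisely what make that synthesis go through.
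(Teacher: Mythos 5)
Your proposal is correct and follows essentially the same route as the paper: the backward direction by a transitive-closure cardinality computation, and the forward direction by first showing $\gamma$ is regular in $M$ via a cofinal map contradiction, then proving $N\subseteq H_\gamma^M$ using collapses computed inside $N$, and finally $H_\gamma^M\subseteq N$ by $\in$-induction combined with a small surjection/bijection and the regularity hypothesis. The only differences are cosmetic: you spell out the backward direction in more detail than the paper does, and you separately rule out surjections onto $\gamma$, which is subsumed by the cofinality argument.
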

\begin{proof}
For the backward direction, suppose that $N=H_\gamma^M$ where $\gamma = \height(N)$ is a regular cardinal in $M$. Then for all $f: x \To N$, with $x \in N$ and $f \in M$, certainly $f``x \in N$ as well.

For the forward direction, indeed $\gamma$ has to be regular in $M$ since otherwise $M$ would contain a cofinal function $f: \alpha \To \gamma$ where $\alpha < \gamma$. By the transitivity of $N$, this implies that $\alpha \in N$. Thus $\cup f``\alpha$ is in $N \models \ZFC^-$ by regularity, so $\gamma \in N$, a contradiction.
We have that $N \subseteq H_{\gamma}^M$ since $N$ is a transitive $\ZFC^-$ model, so the transitive closure of elements of $N$ may be computed in $N$ and thus have size less than $\gamma$, so they are in $H_\gamma^M$ as $\gamma \in M$. To show that $H_{\gamma}^M \subseteq N$, let $x \in H_{\gamma}^M$. We assume by $\in$-induction that $x \subseteq N$. Then there is a surjection $f: \alpha \twoheadrightarrow x$ where $\alpha < \gamma$, in $M$. Hence by regularity, $x = f``\alpha \in N$ as desired.
\end{proof}

\begin{definition} We say that $M$ is \emph{full} so long as $\omega \in M$, and there is a $\gamma$ such that $M$ is regular in $L_\gamma(M)$ and $L_\gamma(M) \models \ZFC^-$.
\end{definition}

Perhaps the property of fullness seems rather mysterious at first, but for the context of subcompleteness, it will be important to have; indeed, the fullness of a countable substructure $\N$ of $N$ guarantees that $\N$ is not pointwise definable, a necessary condition for the potential existence
of more than one embedding from $\N$ to $N$.

\begin{lemma} If $M$ is countable and full, then $M$ is not pointwise definable in the language of set theory. \end{lemma}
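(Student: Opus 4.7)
The plan is to argue by contradiction, using regularity of $M$ inside $L_\gamma(M)$ together with the fact that satisfaction for $M$, viewed as a set-sized structure, is definable in the surrounding $\ZFC^-$ model.

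First I would use fullness to fix $\gamma$ such that $M$ is regular in $L_\gamma(M)$ and $L_\gamma(M)\models\ZFC^-$. Note that $M\in L_\gamma(M)$, so inside $L_\gamma(M)$ the Tarskian satisfaction predicate $\mathrm{Sat}_M\subseteq\omega\times M^{<\omega}$ for the structure $\langle M,\in\rangle$ is definable by the usual recursion; this is where the assumption $L_\gamma(M)\models\ZFC^-$ is used.

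Now suppose, toward a contradiction, that $M$ is pointwise definable, so that for every $x\in M$ there is a formula $\varphi(v)$ in the language of set theory with $M\models\forall v\,(\varphi(v)\leftrightarrow v=x)$. Working inside $L_\gamma(M)$ and using $\mathrm{Sat}_M$, define a function $f\colon\omega\to M$ by setting $f(n)=x$ whenever $n$ codes a formula $\varphi(v)$ such that $x$ is the unique element of $M$ satisfying $\varphi$ in $M$, and $f(n)=\emptyset$ otherwise. By pointwise definability, $\ran(f)=M$. Since $\omega\in M$ and $f\in L_\gamma(M)$, regularity of $M$ in $L_\gamma(M)$ yields $f``\omega\in M$, i.e., $M\in M$, contradicting the foundation axiom (which holds as $M$ is transitive).

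The only real subtlety is verifying that the satisfaction relation for $M$ really is a definable object of $L_\gamma(M)$; once this is in hand, the pigeonhole between pointwise definability (surjection from $\omega$) and regularity (closure under $M$-indexed images of $L_\gamma(M)$-functions) finishes the argument. Everything else is routine. Countability of $M$ is not needed explicitly: pointwise definability already forces $|M|\le\aleph_0$, and the proof only uses that $\omega\in M$, which follows from fullness.
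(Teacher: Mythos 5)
Your argument is correct and is essentially the paper's proof: fullness gives $L_\gamma(M)\models\ZFC^-$ with $M$ regular in it, pointwise definability yields (via the satisfaction relation definable in $L_\gamma(M)$) a surjection $f\colon\omega\to M$ in $L_\gamma(M)$, and regularity then produces the contradiction. If anything, your final step is phrased more carefully than the paper's --- you apply regularity to conclude $f``\omega=M\in M$, contradicting foundation, whereas the paper loosely says $M$ ``must contain $f$'' and witnesses its own countability --- but the underlying idea is identical, and your side remark that the satisfaction predicate must be checked to be definable in $L_\gamma(M)$ is exactly the point the paper leaves implicit.
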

\begin{proof} Suppose toward a contradiction that $M$ is countable, full, and pointwise definable. By fullness there is some $L_\gamma(M) \models \ZFC^-$ such that $M$ is regular in $L_\gamma(M)$. By pointwise definability, for each element $m \in M$, we have attached to it some formula $\varphi(x)$ such that $M \models \varphi(m)$ uniquely, meaning that $\varphi(y)$ fails for every other element $y \in M$. Thus in $L_\gamma(M)$ we may define a function $f: \omega \cong M$, that takes the $n$th formula in the language of set theory to its unique witness in $M$. In particular we have that $L_\gamma(M)$ witnesses that $M$ is countable. However, this would allow $M$ to witness its own countability, since $M$ must contain $f$ as well by regularity. \end{proof} 

\subsection{Subcomplete Forcing} \label{subsec:subcomplete}
Subcompleteness is a weakening of the notion of completeness (that is, countable closure) of forcing notions. The following definitions and proofs are due to Jensen \cite[Ch.~3]{Jensen:2014}.

\begin{definition} A forcing notion $\P$ is \emph{complete} so long as
for any set $s$ and sufficiently large $\theta$ we have that once we are in the following situation: \begin{itemize}
	\item $N$ is a suitable model for $\P,s$ above $\theta$;
	\item $\sigma: \N \cong X \preccurlyeq N$ where $X$ is countable and $\N$ is transitive;
	\item $\sigma(\overline \theta, \overline{\P}, \overline s)=\theta, \P, s$.
\end{itemize}
and $\G$ is $\overline{\P}$-generic over $\N$ then there is a \emph{completeness condition} $p \in \P$ forcing that whenever $G \ni p$ is $\P$-generic, $\sigma ``\, \G \subseteq G$. 

In particular, below the condition $p$ we have that $\sigma$ lifts to an embedding $\sigma^*:\N[\G] \prec N[G]$.
We say that a $\theta$ as above \emph{verifies} the completeness of $\P$.
\end{definition}
The adjustment made to get subcomplete forcings is to not insist the the original embedding lifts in the forcing extension. Instead subcompleteness asks for an embedding, sufficiently similar
to the original one, which lifts to the extension, but may itself only exist in the extension.
As discussed in Section~\ref{subsec:fullness}, the domain of the embedding should be \emph{full} so as to not consistently rule out the possibility of the existence of multiple embeddings like this.

\begin{definition} \label{definition:SC}
A forcing notion $\P$ is \emph{\textbf{subcomplete}} so long as, for any set $s$ and 
for sufficiently large $\theta$ we have that whenever we are in \emph{the standard setup} i.e.: \begin{itemize}
	\item $N$ is a suitable model for $\P,s$ above $\theta$;
	\item $\sigma: \N \cong X \preccurlyeq N$ where $X$ is countable and $\N$ is full;
	\item $\sigma(\overline \theta, \overline{\P}, \overline s)=\theta, \P, s$;
\end{itemize}
then we have that if $\G$ is  $\overline{\P}$-generic over $\N$ then there is a \emph{subcompleteness condition} $p \in \P$ such that whenever $G \ni p$ is $\P$-generic, there is $\sigma' \in V[G]$ satisfying: \begin{enumerate}
	\item $\sigma': \N \prec N$;
	\item $\sigma'(\overline \theta, \overline{\P}, \overline s)=\theta, \P, s$;
	\item \label{item:skolemcompatibility} $\sk{N}{\delta(\P)}{\sigma'} = \Sk{N}{\delta(\P)}{X}$;
	\item  \label{item:sigmaprimelifts}$\sigma'``\, \G \subseteq G$.
\end{enumerate}
In other words and in particular, the subcompleteness condition $p$ forces that there is an embedding $\sigma'$ in the extension $V[G]$ which lifts, by (4), to an embedding $\sigma'^*:\N[\G] \prec N[G]$ in $V[G]$.

We say that such a $\theta$ as above \textit{verifies the subcompleteness} of $\P$.

Often we write $\delta$ instead of $\delta(\P)$ for the weight of $\P$ when there should be no confusion as to which poset $\P$ we are working with.
\end{definition}

\begin{remark}\label{remark:VerifyingSC}Let $\P$ be subcomplete as verified by $\theta$ and let $\theta'>\theta$. Then if $N$ is a suitable model for $\P$ above $\theta'$, it is not hard to see that the very same $N$ is a suitable model for $\P$ above $\theta$ as well. Thus $\P$ is subcomplete as verified by every $\theta' > \theta$. This tells us that $\P$ is subcomplete as long as it is verified by some $\theta$. So ``sufficiently large $\theta$" may be replaced with ``some $\theta$" in the definition of subcompleteness.\footnote{See \cite[Section 3.1 Lemma 2.4]{Jensen:2014}.} \end{remark}

\subsection{Subcomplete above $\mu$} \label{subsec:subcompleteabovemu}
The notion of subcompleteness above $\mu$ is an attempt to measure where exactly subcompleteness kicks in; in some sense it gives a level as to where the forcing fails to be subcomplete. 
The following definition is from Jensen \cite[Ch.~2 p.\ 47]{Jensen:2009fe}.
\begin{definition} 
Let $\mu$ be a cardinal. We say that a forcing notion $\P$ is \emph{subcomplete above $\mu$} so long as for every set $s$ and sufficiently large $\theta > \mu$, whenever we are in the standard setup (as in the definition of subcomplete forcing) in which \begin{itemize}
	\item $N$ is a suitable model for $\P, s$ above $\theta$;
	\item $\sigma:\N \cong X \preccurlyeq N$ where $X$ is countable and $\N$ is full;
	\item $\sigma(\overline{\theta}, \overline{\mu}, \overline{\P}, \overline s)=\theta, \mu, \P, s$;
\end{itemize}	
then, for any $\G \subseteq \overline{\P}$, there is a subcompleteness condition $p \in \P$ such that whenever $G \ni p$ is $\P$-generic, then there is $\sigma' \in V[G]$ as in the definition of subcompleteness satisfying the additional constraint that $\sigma' \rest H_{\overline \mu}^{\N} = \sigma \rest H_{\overline \mu}^{\N}$.

As usual, this means that in particular below the condition $p$ there is an embedding $\sigma'$ that lifts by (4) to an embedding $\sigma'^*:\N[\G] \prec N[G]$ in $V[G]$. We say that a $\theta$ as above \textit{verifies the subcompleteness above $\mu$ of $\P$}, and we may say that $\P$ is subcomplete above \(\mu\) if there is a $\theta$ that verifies its subcompleteness above $\mu$.
\end{definition}

\begin{remark} \label{remark:scaboveomega1} If $\P$ is subcomplete then $\P$ is subcomplete above $\omega_1$. \end{remark}
\begin{proof} By Fact \ref{fact:CPofourEmbeddings}, we have that for any elementary embedding such as $\sigma$ or $\sigma'$ from countable $\N$ to $N$, $\sigma' \rest H_{\overline{\omega_1}}^{\N} = \sigma \rest H_{\overline{\omega_1}}^{\N} = \text{id}$. \end{proof}

The following is a key property of posets that are subcomplete above \(\mu\).
\begin{theorem} \label{thm:nonewctblesubsetsofmu} If $\P$ is subcomplete above $\mu$ then $\P$ does not add new countable subsets of $\mu$. \end{theorem}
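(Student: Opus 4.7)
The plan is to argue by contradiction: suppose $\P$ does add a new countable subset of $\mu$, so some condition $p_0 \in \P$ forces a $\P$-name $\dot{x}$ to be a countable subset of $\mu$ not lying in $V$. Then I would invoke the definition of subcompleteness above $\mu$ with the parameter $s = (p_0, \dot x)$: fix $\theta$ verifying subcompleteness above $\mu$ with $p_0, \dot x, \mu, \P \in H_\theta$, fix a suitable $N$, and pick a full countable $\N$ with $\sigma: \N \prec N$ sending $\bar{s}, \bar \mu, \bar\P, \bar\theta$ to $s, \mu, \P, \theta$. Since $\N$ is countable in $V$, $V$ already contains a $\bar\P$-generic filter $\G$ over $\N$ with $\bar p_0 \in \G$. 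Let $p \in \P$ be a subcompleteness condition for this setup, and let $G \ni p$ be $\P$-generic over $V$.

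Working in $V[G]$, the subcompleteness clause yields $\sigma' : \N \prec N$ with $\sigma'(\bar s, \bar \mu) = s, \mu$, $\sigma'{}'' \G \subseteq G$, and the crucial agreement $\sigma' \rest H_{\bar\mu}^{\N} = \sigma \rest H_{\bar\mu}^{\N}$. In particular $p$ is below $p_0$ (since a separative $p$ that forces $p_0 \in G$ must be an extension), so $x := \dot x[G]$ is the purportedly new countable subset of $\mu$. I would lift $\sigma'$ to $\sigma'^* : \N[\G] \prec N[G]$ in the standard way, and set $\bar y := \bar{\dot x}[\G]$. By elementarity of $\sigma$, $\bar p_0$ forces (in $\N$) that $\bar{\dot x}$ is a countable subset of $\bar\mu$, and $\bar p_0 \in \G$, so $\bar y$ is a countable subset of $\bar\mu$ in $\N[\G]$, and $\sigma'^*(\bar y) = \sigma'(\bar{\dot x})[G] = \dot x[G] = x$.

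The heart of the argument is to show $\sigma'^*(\bar y) = \sigma'' \bar y$. For this, pick a surjection $f : \omega \to \bar y$ inside $\N[\G]$; then $\sigma'^*(f) : \omega \to \sigma'^*(\bar y)$ is onto, and since $\sigma'^*$ fixes each natural number we get $\sigma'^*(f)(n) = \sigma'^*(f(n))$, hence $\sigma'^*(\bar y) = \{\sigma'^*(\alpha) : \alpha \in \bar y\}$. Because $\bar\mu$ is a cardinal in $\N$ by elementarity, every $\alpha \in \bar y$ is an ordinal below $\bar\mu$ and therefore lies in $H_{\bar\mu}^{\N}$, so the agreement condition gives $\sigma'^*(\alpha) = \sigma'(\alpha) = \sigma(\alpha)$. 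Thus $x = \sigma'' \bar y$; but $\sigma$ lives in $V$ and $\bar y \in \N[\G] \subseteq V$ (since $\G \in V$), so $x \in V$, contradicting that $x$ was forced to be new.

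The main obstacle I anticipate is justifying the identity $\sigma'^*(\bar y) = \sigma'{}'' \bar y$ when $\bar y$ itself may not belong to $\N$; this is precisely where the countability of $\bar y$ in $\N[\G]$ is used, together with the fact that the critical point of $\sigma'^*$ exceeds $\omega$. The remaining steps — choosing $\G \in V$, lifting $\sigma'$ across the forcing extension, and packaging the result as a contradiction — are routine once the agreement $\sigma' \rest H_{\bar\mu}^{\N} = \sigma \rest H_{\bar\mu}^{\N}$ has been exploited.
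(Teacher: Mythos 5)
Your proposal is correct and follows essentially the same route as the paper: both invoke subcompleteness above $\mu$ with the name as a parameter, take $\G\in V$ containing the relevant condition, and use $\sigma'``\,\G\subseteq G$ together with $\sigma'\rest H_{\overline\mu}^{\N}=\sigma\rest H_{\overline\mu}^{\N}$ to write the new object as $\sigma``\,\overline y$ for a ground-model $\overline y$, hence as an element of $V$. The only cosmetic difference is that the paper works with a name for a function $\omega\to\mu$ and reads off $f(n)=\sigma(\overline f(n))$ directly, whereas you work with the subset itself and enumerate it inside $\N[\G]$; the underlying mechanism is identical.
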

\begin{proof}
Suppose not. Let $\P$ be subcomplete above $\mu$. 
Fix a condition \(p\in\P\) and let $\dot{f}$ be a name such that
	$p \forces \left( \dot f: \check \omega \to \check{\mu} \right).$
Take $\theta > \mu$ large enough to verify the subcompleteness of $\P$, and let $N$ be a suitable model for $\P, \dot f$ above $\theta$. Moreover assume we are in the standard setup.
\begin{itemize}
	\item $\sigma: \N \cong X \preccurlyeq N$ where $\N$ is countable and full
	\item $\sigma(\overline \theta, \overline \mu, \overline{\P}, \overline p, \overline{\dot f})=\theta, \mu, \P, p, \dot f$.
\end{itemize}
Let $\G$ be $\overline{\P}$-generic over $\N$ with $\overline p \in \overline G$. By the subcompleteness of $\P$ above $\mu$ there is $q \in \P$ such that whenever $q \in G$ where $G$ is $\P$-generic, there is $\sigma' \in V[G]$ satisfying: \begin{enumerate}
	\item $\sigma': \N \prec N$;
	\item $\sigma'(\overline \theta, \overline \mu, \overline{\P}, \overline p, \overline{\dot f})=\theta, \mu, \P, p, \dot f$;
	\item $\sk{N}{\delta}{\sigma'} = \Sk{N}{\delta}{X}$;
	\item $\sigma'``\G \subseteq G$;
	\item $\sigma' \rest \overline \mu = \sigma \rest \overline \mu$.
\end{enumerate}
Let $\overline f = \overline{\dot f}^{\G}$ and $f = \dot f^G$. By (4) and (5), for each $n$, $f(n)=\sigma'(\overline f(n)) = \sigma(\overline f(n)),$ meaning that $f \in V$ since $\sigma,\overline{f} \in V$.
\end{proof}
Thus if $\P$ is subcomplete above $\mu$, then $\mu$'s cardinality, and even its cofinality, cannot be altered to be $\omega$ via $\P$.

It follows from Remark \ref{remark:scaboveomega1} and Theorem \ref{thm:nonewctblesubsetsofmu} that subcomplete forcing does not add new reals.

\subsection{Barwise Theory}
\label{subsec:BarwiseTheory}
In order to show that many posets are subcomplete, Jensen exploits Barwise Theory and techniques using countable admissible structures to obtain transitive models of infinitary languages.  The following is an outline of Jensen's notes on the subject \cite[Ch.~1 \& 2]{Jensen:2014}. 

\begin{definition} A transitive structure $\M$ is \emph{admissible} if it models the axioms of \textsf{Kripke-Platek Set Theory} (\textsf{KP}) which consists of the axioms of \textsf{Empty Set}, \textsf{Pairing}, \textsf{Union}, $\Sigma_0$-\textsf{Collection}, and $\Sigma_0$-\textsf{Separation}. \end{definition}

Jensen also makes use of models of $\textsf{ZF}^-$ that are not necessarily well-founded.
\begin{definition} Let $\mathfrak A = \langle A, \in_{\mathfrak A}, B_1, B_2, \dots \rangle$ be a (possibly ill-founded) model  of $\textsf{ZF}^-$, where $\mathfrak A$ is allowed to have predicates other than $\in$. The \emph{well-founded core} of $\mathfrak A$, denoted $\wfc(\mathfrak A)$, is the restriction of $\mathfrak A$ to the set of all $x \in A$ such that $\in_{\mathfrak A} \cap \ \mathcal C(x)^2$ is well founded, where $\mathcal C(x)$ is the closure of $\{x\}$ under $\in_{\mathfrak A}$. A model $\mathfrak A$ of $\textsf{ZF}^-$ is \emph{solid} so long as $\wfc(\mathfrak A)$ is transitive and $\in_{\wfc(\mathfrak A)}=\in \cap \wfc(\mathfrak A)^2$. \end{definition}

Jensen \cite[Section 1.2]{Jensen:2014} notes that every consistent set of sentences in $\textsf{ZF}^-$ has a solid model, and if $\mathfrak A$ is solid, then $\omega \subseteq \wfc(\mathfrak A)$. In addition,

\begin{fact}[{\cite[Ch.~1 Lemma 21]{Jensen:2014}}] If a model $\mathfrak A$ of $\textsf{\textup{ZF}}^-$ is solid, then $\wfc(\mathfrak A)$ is admissible. \end{fact}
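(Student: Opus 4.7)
Let $W = \wfc(\mathfrak{A})$. By solidity, $W$ is transitive in $\mathfrak{A}$ and $\in^W = \in \cap W^2$, so $\Sigma_0$-formulas with parameters in $W$ are absolute between $W$ and $\mathfrak{A}$. The plan is to verify each of the five axioms of $\textsf{KP}$ for $W$, the key tool being the following closure principle: if $S \in \mathfrak{A}$ and every $\in^{\mathfrak{A}}$-member of $S$ lies in $W$, then $S \in W$. This holds because $\mathcal{C}(S) = \{S\} \cup \bigcup \{\mathcal{C}(y) : y \in^{\mathfrak{A}} S\}$, each $\mathcal{C}(y)$ for $y \in W$ is well-founded under $\in^{\mathfrak{A}}$, and any infinite $\in^{\mathfrak{A}}$-descending chain from $S$ would after one step lie entirely within some $\mathcal{C}(y)$ with $y \in W$, a contradiction.

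Given the closure principle, Empty Set, Pairing, and Union are immediate: $\emptyset \in \omega \subseteq W$; for $x, y \in W$, the $\mathfrak{A}$-pair $\{x, y\}^{\mathfrak{A}}$ has $\in^{\mathfrak{A}}$-members in $W$ and hence lies in $W$; and for $x \in W$, the $\mathfrak{A}$-union $\bigcup^{\mathfrak{A}} x$ has $\in^{\mathfrak{A}}$-members that are elements of elements of $x$, thus in $W$ by transitivity, so it too lies in $W$. For $\Sigma_0$-Separation, given $x \in W$ and a $\Sigma_0$ formula $\varphi$ with parameters $\vec{p} \in W$, apply Separation in $\mathfrak{A}$ to form $y = \{z \in x : \mathfrak{A} \models \varphi(z, \vec{p})\}$. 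Since $y \subseteq x \subseteq W$, the closure principle gives $y \in W$, and $\Sigma_0$-absoluteness identifies $y$ with the separation set as computed in $W$.

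The essential work is $\Sigma_0$-Collection. Suppose $x \in W$ and $W \models \forall a \in x \, \exists y \, \varphi(a, y, \vec{p})$. By $\Sigma_0$-absoluteness, for each $a \in x$ there is some $y \in W$ with $\mathfrak{A} \models \varphi(a, y, \vec{p})$. Using Foundation in $\mathfrak{A}$, let $\rho(a)$ be the least $\mathfrak{A}$-rank of such a witness. Since a $W$-witness has well-founded $\mathfrak{A}$-rank and $\mathfrak{A}$-ordinals below a well-founded ordinal are themselves in $W$ by transitivity, $\rho(a) \in W$ for each $a \in x$. By Collection and Separation in $\mathfrak{A}$ the set $\{\rho(a) : a \in x\}$ exists in $\mathfrak{A}$ and lies in $W$ by the closure principle; let $\alpha = \bigcup^{\mathfrak{A}} \{\rho(a) : a \in x\}$, an ordinal in $W$ bounding every $\rho(a)$. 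Apply Collection in $\mathfrak{A}$ to the relation $\varphi(a, y, \vec{p}) \wedge \mathrm{rank}^{\mathfrak{A}}(y) \leq \alpha$, then Separation, to obtain $b \in \mathfrak{A}$ such that every $a \in x$ has a witness in $b$ and every element of $b$ has $\mathfrak{A}$-rank at most $\alpha$. Each element of $b$ is then well-founded, so $b \in W$, and $b$ serves as the collecting set for $W$ via $\Sigma_0$-absoluteness.

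The main obstacle I expect is the rank-bounding step, specifically verifying $\rho(a) \in W$: this combines the fact that any $W$-witness $y$ has $\mathfrak{A}$-rank equal to its externally computed rank (hence a well-founded ordinal, by $\in$-induction through $\mathcal{C}(y)$) with the observation that $\mathfrak{A}$-ordinals below a well-founded ordinal are well-founded (because $\mathfrak{A}$-ordinals are $\in^{\mathfrak{A}}$-transitive, so transitivity of $W$ in $\mathfrak{A}$ applies). Once the closure principle and this rank comparison are both in hand, the verification of the remaining $\textsf{KP}$ axioms is uniform and essentially routine.
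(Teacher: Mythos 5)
The paper does not prove this statement at all: it is quoted verbatim from Jensen (Ch.~1, Lemma 21) as an imported Fact, so there is no internal proof to compare against. Your argument is correct and is the standard ``truncation lemma'' proof that the well-founded core of a solid model of $\textsf{ZF}^-$ is admissible: the closure principle (a set of $\mathfrak A$ all of whose $\in^{\mathfrak A}$-members lie in $\wfc(\mathfrak A)$ is itself in $\wfc(\mathfrak A)$) is exactly the right engine, the first four axioms follow immediately from it together with $\Sigma_0$-absoluteness granted by solidity, and the rank-bounding argument is the standard way to get $\Sigma_0$-Collection. Two small points deserve explicit mention if you write this up fully: $\rho(a)$ should be defined inside $\mathfrak A$ as the least $\mathfrak A$-rank of \emph{any} witness (the well-founded core is not $\mathfrak A$-definable), with membership in $\wfc(\mathfrak A)$ then following because it is bounded by the rank of a witness lying in $\wfc(\mathfrak A)$; and the final step, that every $y$ with $\mathrm{rank}^{\mathfrak A}(y)\leq\alpha$ for a true ordinal $\alpha$ already lies in $\wfc(\mathfrak A)$, needs its own (routine) induction on $\alpha$ using your closure principle, parallel to the rank-comparison induction you flagged. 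Both go through, so the proof is sound and matches the standard approach one finds in Jensen/Barwise.
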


Barwise creates an $\M$-finite predicate logic, a first order theory in which arbitrary, but $\M$-finite, disjunctions and conjunctions are allowed.
\begin{definition} Let $\M$ be a transitive structure with potentially infinitely many predicates. A theory defined over $\M$ is $\M$-\emph{finite} so long as it is in $\M$. A theory is $\Sigma_1(\M)$, also known as \emph{$\M$-recursively enumerable} or $\M$-$re$, if the theory is $\Sigma_1$-definable, with parameters from $\M$. \end{definition}
Of course this may be generalized to the entire usual Levy hierarchy of formulae, but only $\Sigma_1(\M)$ is needed in this paper. If $\mathcal L$ is a $\Sigma_1(\M)$-definable language or theory, the rough idea is that to check whether a sentence is in $\mathcal L$, one should imagine enumerating the formulae of $\mathcal L$ to find a sentence and a witness to it in the structure $\M$.

\begin{definition} \label{def:InTheoriesAndBasicAxioms} Let $\M$ be admissible. An infinitary axiomatized theory in $\M$-finitary logic $\mathcal L=\mathcal L(\M)$, with a fixed predicate $\dot \in$ and \emph{special constants} denoted $\underline x$ for elements $x \in \M$, is called an \textit{$\in$-theory} over $\M$. The underlying axioms for these $\in$-theories will always involve $\ZFC^-$ and some basic axioms ensuring that $\dot \in$ behaves nicely; 
the \textsf{Basic Axioms} are: \begin{itemize}
	\item \textsf{Extensionality}
	\item A statement positing the extensionality of $\dot \in$, which is a scheme of formulae defined for each member of $M$. For each $x \in M$,
	$\forall v \left( v \ \dot \in\, \underline x \iff \bigdoublevee_{z \in x} v= \underline z \right).$
\end{itemize} Here $\bigdoublevee$ denotes an infinite disjunction in the language.\end{definition}

In the above definition, it should be clarified that it is possible to consider the same $\in$-theory defined over different admissible structures; if $\M, \M'$ are both admissible, then we can consider both $\mathcal L(\M)$ and $\mathcal L(\M')$. The distinction is only as to where the special constants come from. 

An important fact ensured by our \textsf{Basic Axioms} is that the interpretations of these special constants in any solid model of the theory are the same as in $\M$:

\begin{fact}[{\cite[Ch.~2, Lemma 9]{Jensen:2014}}] \label{fact:PointofBasicAxioms} Let $\M$ be admissible and let $\mathcal L$ be an $\in$-theory over $\M$. Let $\mathfrak A$ be a solid model of $\mathcal L$. Then for all $x \in \M$, we have that $\underline{x}^{\mathfrak A} = x \in \wfc(\mathfrak A)$. \end{fact}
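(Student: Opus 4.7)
The plan is to proceed by $\in$-induction on $x \in \M$, which is legitimate because $\M$ is transitive and therefore $\in$ is well-founded on it. The induction hypothesis at stage $x$ will say that for every $z \in x$ we already have $\underline{z}^{\mathfrak A} = z$ and $\underline{z}^{\mathfrak A} \in \wfc(\mathfrak A)$. The single driving fact throughout is the extensionality scheme among the Basic Axioms from Definition~\ref{def:InTheoriesAndBasicAxioms}, which, evaluated in $\mathfrak A$, reads
\[
\mathfrak A \models \forall v\left( v\ \dot\in\ \underline{x} \iff \bigdoublevee_{z \in x} v = \underline{z}\right).
\]

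Using this axiom, the $\dot\in_{\mathfrak A}$-elements of $\underline{x}^{\mathfrak A}$ are exactly the objects $\underline{z}^{\mathfrak A}$ for $z \in x$, which by the induction hypothesis coincide with the elements of $x$ and all lie in $\wfc(\mathfrak A)$. The next step is to verify that $\underline{x}^{\mathfrak A}$ itself lies in $\wfc(\mathfrak A)$. For this, I would observe that the closure $\mathcal C(\underline{x}^{\mathfrak A})$ of $\{\underline{x}^{\mathfrak A}\}$ under $\dot\in_{\mathfrak A}$ is just $\{\underline{x}^{\mathfrak A}\}$ together with $\bigcup_{z \in x} \mathcal C(\underline{z}^{\mathfrak A})$; by induction each piece on the right is a subset of the transitive set $\wfc(\mathfrak A)$, on which $\dot\in_{\mathfrak A}$ is well-founded by definition of $\wfc$, and the top element $\underline{x}^{\mathfrak A}$ only sits above these. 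So $\dot\in_{\mathfrak A}$ is well-founded on $\mathcal C(\underline{x}^{\mathfrak A})^2$, placing $\underline{x}^{\mathfrak A}$ in $\wfc(\mathfrak A)$.

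Once $\underline{x}^{\mathfrak A} \in \wfc(\mathfrak A)$, solidity of $\mathfrak A$ takes over: it gives that $\wfc(\mathfrak A)$ is transitive and $\in_{\wfc(\mathfrak A)} = \in \cap \wfc(\mathfrak A)^2$, so the genuine $\in$-elements of $\underline{x}^{\mathfrak A}$ agree with its $\dot\in_{\mathfrak A}$-elements. We computed the latter to be precisely the elements of $x$, so by ordinary extensionality in $V$ we conclude $\underline{x}^{\mathfrak A} = x$, completing the induction.

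The main obstacle, modest as it is, is bookkeeping around three different membership relations — the formal symbol $\dot\in$, its interpretation $\dot\in_{\mathfrak A}$, and real $\in$ — and ensuring that the Basic Axiom is used to translate between the first two while solidity is used to translate between the last two. The induction has to be arranged so that well-foundedness of $\underline{x}^{\mathfrak A}$ in $\mathfrak A$ is established \emph{before} invoking extensionality in $V$ to identify $\underline{x}^{\mathfrak A}$ with $x$; otherwise one risks comparing elements across the ill-founded part of $\mathfrak A$, where the identification fails.
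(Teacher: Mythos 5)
Your argument is correct. The paper itself gives no proof of this statement---it is imported as a Fact with a citation to Jensen (Ch.~2, Lemma~9)---and your proof by $\in$-induction on $x \in \M$, using the Basic Axiom scheme to identify the $\dot\in_{\mathfrak A}$-extension of $\underline{x}^{\mathfrak A}$ with $\{\underline{z}^{\mathfrak A} : z \in x\}$ and then solidity (transitivity of $\wfc(\mathfrak A)$ and agreement of $\in_{\mathfrak A}$ with $\in$ there) to conclude $\underline{x}^{\mathfrak A}=x$, is exactly the standard argument behind Jensen's lemma. The only point worth tightening is the claim that $\underline{x}^{\mathfrak A}$ ``only sits above'' the closures $\mathcal C(\underline{z}^{\mathfrak A})$: a priori it could also occur inside one of them, but then it would already belong to $\wfc(\mathfrak A)$, so well-foundedness of $\in_{\mathfrak A}$ on $\mathcal C(\underline{x}^{\mathfrak A})^2$ follows in either case and the induction goes through as you describe.
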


Jensen uses the techniques of Barwise to come up with a proof system in this context, in which consistency of $\in$-theories can be discussed. In particular, the semantics is sound for this syntax: if there is a model of an \(\in\)-theory, then the theory is consistent. 
\begin{fact}[\emph{Barwise Correctness}] \label{fact:correctness} 
Let $\mathcal L$ be an $\in$-theory. If $X$ is a set of $\mathcal L$-statements and $\mathfrak A \models X$, then $X$ is consistent. \end{fact}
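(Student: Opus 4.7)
The plan is to prove Barwise Correctness as the soundness direction for the infinitary proof system Jensen attaches to $\in$-theories. I would argue the contrapositive: if $X$ is inconsistent (i.e., derives $\bot$), then $X$ has no model. Equivalently, by induction on the rank of a derivation, every $\mathcal{L}$-statement provable from $X$ is true in any model of $X$; specializing to a derivation of $\bot$ then contradicts the hypothesis that $\mathfrak{A} \models X$.

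First I would unpack the shape of the Barwise proof calculus. Besides the usual first-order axioms and inference rules (axioms of equality, propositional tautologies, modus ponens, introduction and elimination of $\forall$ and $\exists$ over fresh variables), the infinitary logic adds rules for the new connectives: from derivations of $\varphi_i$ for each $i$ in an $\mathcal{M}$-finite (or $\Sigma_1(\mathcal{M})$) index set $I$, one may conclude $\bigwedge_{i\in I}\varphi_i$; from $\bigwedge_{i\in I}\varphi_i$ one may conclude any single $\varphi_j$; and dually for $\bigdoublevee$. A derivation is a well-founded tree whose leaves are logical axioms or statements of $X$, and whose internal nodes are instances of these rules.

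The heart of the argument is verifying that each axiom is valid in $\mathfrak{A}$ and that each rule preserves truth in $\mathfrak{A}$. The first-order logical axioms and the \textsf{Basic Axioms} of Definition~\ref{def:InTheoriesAndBasicAxioms} are satisfied by any structure interpreting $\mathcal{L}$ correctly, and the classical propositional and quantifier rules preserve truth by the standard arguments. For the new ingredient, I would invoke the obvious satisfaction clauses: $\mathfrak{A}\models\bigwedge_{i\in I}\varphi_i$ exactly when $\mathfrak{A}\models\varphi_i$ for every $i\in I$, and the analogous clause for $\bigdoublevee$. These clauses make the $\bigwedge$-introduction/elimination and $\bigdoublevee$-introduction/elimination rules immediate preservers of truth. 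A transfinite induction on the rank of the proof tree then shows that $\mathfrak{A}\models\varphi$ for every $\varphi$ derivable from $X$. Since $\mathfrak{A}\not\models\bot$, no derivation of $\bot$ from $X$ exists, so $X$ is consistent.

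The main obstacle is bookkeeping around the infinitary rules: a single application of $\bigwedge$-introduction has as many immediate premises as its index set, so derivations are generally infinitely branching trees of transfinite rank rather than finite objects. The induction therefore has to run on ordinal rank, and one must verify that the rank function on derivations is well-defined. This is precisely where the admissibility of $\mathcal{M}$ enters: since every Barwise derivation is $\mathcal{M}$-finite and $\mathcal{M}$ satisfies enough \textsf{Collection} to define ranks on well-founded $\mathcal{M}$-finite trees, the induction is available, and the soundness argument then proceeds by the same template as Gödel's completeness proof for first-order logic, extended coordinate-by-coordinate to the infinitary connectives.
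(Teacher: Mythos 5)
Your argument is correct and is essentially the standard one: Barwise Correctness is just soundness of the infinitary $\mathcal{M}$-finite proof calculus, proved by induction on the (well-founded) derivation, checking that logical axioms are valid and that each rule, including $\bigwedge$/$\bigdoublevee$ introduction and elimination, preserves truth in $\mathfrak{A}$; the paper itself does not prove this statement but imports it from Jensen's notes, where the proof follows exactly this template. One small correction: admissibility of $\mathcal{M}$ is not what makes the rank induction legitimate --- the induction on well-founded proof trees is carried out in $V$, so soundness needs no hypotheses on $\mathcal{M}$ at all (and in particular no countability); admissibility and $\Sigma_1(\mathcal{M})$-definability are what drive the nontrivial converse direction, Barwise Completeness (Fact~\ref{fact:completeness}), and compactness, not correctness. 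Likewise, the \textsf{Basic Axioms} should be treated simply as members of the theory that $\mathfrak{A}$ is assumed to satisfy (they constrain the interpretation of the special constants), not as logically valid axioms.
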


Furthermore, the proof system is absolute enough that consistency statements are downward absolute. In particular, it will be useful to know that if a theory is consistent in a forcing extension, then it is consistent in the ground model. Compactness and completeness are also shown, relativized to the $\M$-finite predicate logics that are used here; solid models of consistent $\Sigma_1(\M)$ $\in$-theories are produced for countable admissible structures $\M$. 

\begin{fact}[\emph{Barwise Completeness}] \label{fact:completeness} Let $\M$ be a countable admissible structure. Let $\mathcal L$ be a consistent $\Sigma_1(\M)$ $\in$-theory with $\mathcal L \vdash \textsf{\textup{ZF}}^-$. Then $\mathcal L$ has a solid model $\mathfrak A$ such that $\Ord \cap \wfc(\mathfrak A) = \Ord \cap \M.$ \end{fact}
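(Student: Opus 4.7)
The plan is to carry out a Barwise-style Henkin construction over the countable admissible structure $\M$ and then extract the desired solid model from the resulting term model. First I would, externally in $V$, enumerate in order-type $\omega$ all $\mathcal L$-sentences together with all infinitary disjunctions appearing in them; this is possible because $\M$ is countable. I would then recursively construct a chain of consistent $\Sigma_1(\M)$ $\in$-theories $\mathcal L = T_0 \subseteq T_1 \subseteq \cdots$, at stage $n$ either adjoining the $n$th sentence in the enumeration or its negation (whichever preserves consistency in Barwise's proof calculus), adjoining a Henkin witness $\varphi(\underline c)$ for each existential sentence placed into the theory, and, crucially for the infinitary setting, selecting and adjoining one disjunct $\psi_{i_0}$ of each infinitary disjunction $\bigdoublevee_i \psi_i$ that enters the theory. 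The admissibility of $\M$---specifically $\Sigma_0$-separation and $\Sigma_1$-collection---is what keeps consistency checkable at each stage and preserves the $\Sigma_1(\M)$-ness of the growing theory.

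Setting $T^* = \bigcup_n T_n$ and passing to the standard term model $\mathfrak A$, whose elements are equivalence classes of closed terms under provable equality with $[s] \, \dot\in \, [t]$ iff $T^* \vdash s \, \dot\in \, t$, I would verify $\mathfrak A \models T^*$ using the Henkin property together with the disjunction choices built into the construction, so that in particular $\mathfrak A \models \mathcal L$. By the \textsf{Basic Axioms} and Fact~\ref{fact:PointofBasicAxioms}, every special constant $\underline x$ for $x \in \M$ is interpreted as $x$ itself inside $\wfc(\mathfrak A)$, whence $\M \subseteq \wfc(\mathfrak A)$ and $\mathfrak A$ is solid. The inclusion $\Ord \cap \M \subseteq \Ord \cap \wfc(\mathfrak A)$ is then immediate.

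The main obstacle is the reverse inclusion $\Ord \cap \wfc(\mathfrak A) \subseteq \Ord \cap \M$. By the Truncation Lemma for models of $\textsf{KP}$, $\wfc(\mathfrak A)$ is itself admissible, so its ordinal height is an admissible ordinal $\geq \Ord \cap \M$, and to rule out strict inequality I would tune the Henkin construction so that for any closed term $t$ for which the current theory proves ``$t$ is an ordinal and $\underline \alpha \, \dot\in \, t$ for every $\alpha \in \Ord \cap \M$'', a later stage adjoins an infinite $\dot\in$-descending sequence of fresh Henkin constants below $t$, thereby pushing $t$ into the ill-founded part of the resulting model. The consistency of this additional scheme is the delicate technical core of the argument: it must be verified by an $\M$-finite compactness-style step showing that any $\M$-finite fragment involves only finitely many of the new descending constants and is therefore satisfiable in an honest transitive model of sufficient height, after which downward absoluteness of consistency in Barwise's proof calculus lifts this local satisfiability back to global consistency of the $\Sigma_1(\M)$-theory. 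The resulting $\mathfrak A$ then has every ordinal of $\wfc(\mathfrak A)$ realized as some $\underline\alpha$ with $\alpha \in \Ord \cap \M$, yielding $\Ord \cap \wfc(\mathfrak A) = \Ord \cap \M$ as required.
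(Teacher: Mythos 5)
The paper never proves this Fact---it is imported wholesale from Jensen's Barwise-theory chapters---so your attempt has to be measured against the standard argument rather than an in-paper one. Your skeleton is the right shape for the easy half: an external $\omega$-enumeration of the countable fragment, a Henkin-style chain of consistent $\Sigma_1(\M)$ theories choosing witnesses and disjuncts, the term model, and then the \textsf{Basic Axioms} giving $\underline{x}^{\mathfrak A}=x$ and hence $\Ord\cap\M\subseteq\Ord\cap\wfc(\mathfrak A)$. (Two small corrections there: solidity is \emph{arranged} by Mostowski-collapsing the well-founded part of the term model, and only then does Fact~\ref{fact:PointofBasicAxioms} apply; it is not a consequence of $\M\subseteq\wfc(\mathfrak A)$.) The real content of the Fact is the reverse inclusion, and there your proposal has two genuine gaps.

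First, coverage: you only act on closed terms $t$ for which the \emph{current} theory already proves that $t$ is an ordinal with $\underline{\alpha}\mathrel{\dot\in} t$ for every $\alpha\in\Ord\cap\M$. A term can come to dominate all standard ordinals only in the limit of the construction, in which case your bookkeeping never seeds ill-foundedness below it and the final model may perfectly well have a genuine ordinal $\geq \Ord\cap\M$ in its well-founded core. The standard repair is a dichotomy at the stage devoted to $t$: if for some $\alpha\in\Ord\cap\M$ the sentence $\neg(\underline{\alpha}\mathrel{\dot\in} t)$ can be consistently added, add it (so $t$, if it is a well-founded ordinal at all, is standard); only in the remaining case does the theory provably place $t$ above every $\underline{\alpha}$, and then one adds the descending chain. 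Second, and more seriously, your consistency certificate for adding that chain does not work: since $\omega\in\M$, the scheme $\{c_{n+1}\mathrel{\dot\in} c_n : n<\omega\}\cup\{c_0\mathrel{\dot\in} t\}$ is a single $\M$-finite set, so Barwise ($\M$-finite) compactness gives no reduction to ``finitely many of the new descending constants,'' ordinary finitary compactness is simply unavailable for this logic, and a transitive (``honest'') model can never witness the very ill-foundedness you are trying to force, so satisfiability in such a model is the wrong target altogether. What is actually needed at this point is a boundedness argument for the $\Sigma_1(\M)$ proof relation, carried out inside the admissible $\M$: if the chain cannot be consistently added, a rank analysis of the Barwise proof of inconsistency produces some $\alpha\in\Ord\cap\M$ with $T\vdash t\mathrel{\dot\in}\underline{\alpha}$, contradicting the case hypothesis. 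This is in essence H.~Friedman's standard-part theorem (equivalently, the content of Jensen's proof of the Fact), and it is exactly the step your sketch flags as delicate but leaves unproven.
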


The following definition generalizes the concept of fullness.
\begin{definition} We say that $M$ is \emph{almost full} so long as $\omega \in M$ and there is a solid $\mathfrak A \models \ZFC^-$ with $M \in \wfc(\mathfrak A)$ and $M$ is regular in $\mathfrak A$. \end{definition}
Clearly if $M$ is full, then $M$ is almost full.

A useful technique when showing a particular forcing is subcomplete is to be able to transfer the consistency of $\in$-theories over one admissible structure to another. First we define what it means for an embedding to be cofinal.

\begin{definition} Let $\delta_N$ be the least $\delta$ such that $L_\delta(N)$ is admissible. \end{definition}

\begin{definition} We say that an elementary embedding $\sigma: M \prec N$ is \emph{cofinal} so long as for each $x \in N$ there is some $u \in M$ such that $x \in \sigma(u)$. 

Let $\delta \in M$ be a cardinal. We say that $\sigma$ is \emph{$\delta$-cofinal} so long as every such $u$ has size less than $\delta$ as computed in $M$. \end{definition}

The following is a fairly straightforward exercise that sheds light on some of the advantages of $\alpha$-cofinal embeddings.

\begin{observation}[{\cite[Cor. 3.7]{Jensen:2014}}] \label{observation:regularitySups}
Let $\sigma:M \prec N$ $\delta$-cofinally, where $\delta \in M$ is a cardinal in $M$. Let $\nu \geq \delta$ be a regular cardinal in $M$. Then $\sigma(\nu)=\sup \sigma``\nu$ and $H_{\sigma(\nu)}^N=\bigcup_{u \in H_\nu^M}\sigma(u)$.
\end{observation}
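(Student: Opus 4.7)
The plan is to use $\delta$-cofinality to replace ``witnesses'' in $N$ by things of size less than $\nu$ in $M$, and then use the regularity of $\nu$ in $M$ to keep everything bounded or hereditarily small.

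For the first equality $\sigma(\nu)=\sup\sigma``\nu$, the inequality $\sup\sigma``\nu\le\sigma(\nu)$ is immediate from elementarity. For the other direction, fix $\alpha<\sigma(\nu)$. By $\delta$-cofinality, pick $u\in M$ with $|u|^M<\delta$ and $\alpha\in\sigma(u)$. Working in $M$, form $u'=u\cap\nu$; by elementarity $\sigma(u')=\sigma(u)\cap\sigma(\nu)$, so $\alpha\in\sigma(u')$. Since $|u'|^M<\delta\le\nu$ and $\nu$ is regular in $M$, $u'$ is bounded in $\nu$, say by $\beta<\nu$. Then $\sigma(\beta)=\sup\sigma``u'\ge\alpha$ (supremum of ordinals is absolute), so $\alpha<\sigma(\beta+1)\in\sigma``\nu$. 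Hence $\sigma(\nu)\le\sup\sigma``\nu$.

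For the second equality, the inclusion $\supseteq$ is routine: if $u\in H_\nu^M$, then by elementarity $\sigma(u)\in H_{\sigma(\nu)}^N$, and since $H_{\sigma(\nu)}^N$ is transitive, $\sigma(u)\subseteq H_{\sigma(\nu)}^N$. For $\subseteq$, fix $x\in H_{\sigma(\nu)}^N$, and use $\delta$-cofinality to pick $u\in M$ with $x\in\sigma(u)$ and $|u|^M<\delta$. In $M$, set
\[u'=\{v\in u\st v\in H_\nu^M\}.\]
Then $|u'|^M<\delta\le\nu$ and each element of $u'$ has transitive closure of size less than $\nu$ in $M$; by the regularity of $\nu$ in $M$, the union of these transitive closures still has size less than $\nu$, so $u'\in H_\nu^M$. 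By elementarity, $\sigma(u')=\sigma(u)\cap H_{\sigma(\nu)}^N$, and since $x$ belongs to both of these sets, $x\in\sigma(u')$, as desired.

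Neither direction presents a real obstacle; the only place requiring any care is the second equality, where one must see that the naturally defined $u'$ really lives in $H_\nu^M$, and this relies crucially on both $|u|^M<\delta\le\nu$ (coming from $\delta$-cofinality) and the regularity of $\nu$ in $M$ to bound the size of $\mathrm{TC}(u')$. The rest is bookkeeping with elementarity and the absoluteness of suprema and transitive closures for transitive structures.
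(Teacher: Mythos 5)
Your proof is correct. The paper itself gives no proof of this observation (it is cited as a straightforward exercise from Jensen), and your argument is exactly the standard direct verification: use $\delta$-cofinality to produce a small $u\in M$, intersect with $\nu$ (resp.\ with $H_\nu^M$, via separation on the formula $|\mathrm{TC}(v)|<\nu$), and use regularity of $\nu$ in $M$ to bound $u'$ (resp.\ to see $u'\in H_\nu^M$). One cosmetic slip: the intermediate claim $\sigma(\beta)=\sup\sigma``u'$ need not hold, since $\sigma``u'$ can be a proper subset of $\sigma(u')$ and in general one only gets $\sup\sigma``u'\le\sigma(\beta)$; but your argument never needs this equality, because $u'\subseteq\beta+1$ already gives $\sigma(u')\subseteq\sigma(\beta+1)$, hence $\alpha<\sigma(\beta+1)\in\sigma``\nu$ directly.
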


The Transfer Lemma is an upward absoluteness statement, transferring the consistency of a $\Sigma_1$ $\in$-theory over an almost full model upward, via cofinal elementary embeddings. 

\begin{fact}[\emph{The Transfer Lemma} {\cite[Lemma 4.5]{Jensen:2014}}]\label{fact:Transfer} Let $N_1$ be almost full and $k: N_1 \prec N_0$ cofinally, for some structure $N_0$. Suppose that we have a $\Sigma_1(\langle N_1; \, p_1, \dots, p_n\rangle)$  $\in$-theory $\mathcal L=\mathcal L(L_{\delta_{N_1}}(N_1))$  for $p_1, \dots, p_n \in N_1$.

Moreover, suppose $\mathcal L(L_{\delta_{N_0}}(N_0))$ is also defined and is a $\Sigma_1(\langle N_0; \, k(p_1), \dots, k(p_n)\rangle)$ $\in$-theory as well. 

Then, 
if $\mathcal L(L_{\delta_{N_1}}(N_1))$ is consistent, it follows that $\mathcal L(L_{\delta_{N_0}}(N_0))$ is also consistent. \end{fact}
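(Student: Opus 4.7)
The plan is to argue by contrapositive: assume $\mathcal L_0 := \mathcal L(L_{\delta_{N_0}}(N_0))$ is inconsistent, and derive the inconsistency of $\mathcal L_1 := \mathcal L(L_{\delta_{N_1}}(N_1))$, contradicting the hypothesis. The guiding observation is that Barwise-inconsistency of a $\Sigma_1$-definable $\in$-theory is itself a $\Sigma_1$ assertion about the ambient admissible structure with the same parameters, so a suitably $\Sigma_1$-elementary lift of $k$ will reflect inconsistency downward.

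First I would extend $k : N_1 \prec N_0$ by $\in$-recursion along the uniform $L(X)$-hierarchy to an embedding $\tilde k : L_{\delta_{N_1}}(N_1) \to L_{\delta_{N_0}}(N_0)$. The three key claims are: (i) that $\tilde k$ lands in $L_{\delta_{N_0}}(N_0)$, using that $\delta_{N_0}$ is the least admissible height above $N_0$ while $L_{\delta_{N_1}}(N_1)$ is itself admissible, so its $\tilde k$-image is an admissible $L$-level over $N_0$ and must sit inside the least such; (ii) that $\tilde k$ is cofinal in $L_{\delta_{N_0}}(N_0)$, since every element there is built from $N_0$ plus bounded ordinal data, both of which are captured in some $\tilde k(u)$ using the cofinality of $k$ on $N_0$ together with admissibility on the ordinal side; and (iii) that cofinal $\Sigma_0$-elementary embeddings are automatically $\Sigma_1$-elementary, so $\tilde k$ is $\Sigma_1$-elementary with respect to the expanded parameter structures $\langle L_{\delta_{N_1}}(N_1); p_1, \dots, p_n \rangle$ and $\langle L_{\delta_{N_0}}(N_0); k(p_1), \dots, k(p_n) \rangle$.

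With $\tilde k$ in place the reflection follows immediately. The assertion ``$\mathcal L_0$ has a Barwise refutation'' is $\Sigma_1$ over $\langle L_{\delta_{N_0}}(N_0); k(p_1), \dots, k(p_n) \rangle$, because Barwise proofs are elements of the admissible structure, the predicate ``is a valid proof'' is $\Delta_1$, and the axiom set $\mathcal L_0$ is $\Sigma_1$ in those parameters by hypothesis. Applying $\Sigma_1$-elementarity of $\tilde k$, with $\tilde k(p_i) = k(p_i)$, the corresponding $\Sigma_1$-statement must hold over $\langle L_{\delta_{N_1}}(N_1); p_1, \dots, p_n \rangle$, which is precisely that $\mathcal L_1$ has a Barwise refutation, contradicting the assumed consistency of $\mathcal L_1$.

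I expect the main obstacle to be step (i) above: showing that $\tilde k$ actually lands in $L_{\delta_{N_0}}(N_0)$, which amounts to controlling where $\tilde k$ sends the ordinals below $\delta_{N_1}$. Almost-fullness of $N_1$ is likely invoked here by passing to a $\ZFC^-$-ambient in which $L_{\delta_{N_1}}(N_1)$ is a set and cofinality/regularity arguments in the spirit of Observation~\ref{observation:regularitySups} can be applied to suprema of ordinal images. A subsidiary concern is that the axioms of $\mathcal L_0$ feature special constants $\underline x$ for \emph{all} $x \in L_{\delta_{N_0}}(N_0)$, whereas those of $\mathcal L_1$ only range over $L_{\delta_{N_1}}(N_1)$; the $\Sigma_1$-form of the axiom scheme together with cofinality of $\tilde k$ should ensure that every axiom actually occurring in a hypothetical refutation of $\mathcal L_0$ has a matching $\tilde k$-preimage axiom of $\mathcal L_1$, so that the translated refutation is genuinely a refutation of $\mathcal L_1$.
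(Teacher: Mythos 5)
There is no in-paper proof to compare against here: the Transfer Lemma is quoted as a black box from Jensen \cite[Lemma 4.5]{Jensen:2014}, so your attempt has to be judged on its own terms. Its final step is in the right spirit (a refutation of a $\Sigma_1$-axiomatized $\in$-theory is an element of the admissible structure, so inconsistency is a $\Sigma_1$ assertion in the displayed parameters, and cofinal $\Sigma_0$-preserving maps are $\Sigma_1$-elementary, so inconsistency reflects along such a map). But the whole argument rests on the connecting map $\tilde k : L_{\delta_{N_1}}(N_1) \to L_{\delta_{N_0}}(N_0)$, and that is precisely what you cannot obtain by "$\in$-recursion along the $L(X)$-hierarchy" from the data given. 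The elementarity of $k$ is elementarity between the structures $N_1$ and $N_0$ only; it gives no control over the hierarchies above them. Well-definedness of the level-by-level copy map already fails in general: whether two codes at level $\alpha+1$ define the same set is a fact about $L_\alpha(N_1)$, not about $N_1$, so it is not preserved by $k$; likewise statements such as ``$N_1$ is countable'' or ``there is a well-order of $N_1$ of type $\beta$'' can hold in $L_\alpha(N_1)$ and fail in $L_\alpha(N_0)$, so a map fixing the ordinal index cannot be elementary. And your step (i) has the containment backwards: if the image of $\tilde k$ really were an admissible level over $N_0$, minimality of $\delta_{N_0}$ would place $L_{\delta_{N_0}}(N_0)$ \emph{inside} that image, not the image inside $L_{\delta_{N_0}}(N_0)$; nothing forces the ordinals below $\delta_{N_1}$ to map below, let alone cofinally into, $\delta_{N_0}$.

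The missing idea is the use of almost fullness, which your sketch defers to a hope (``likely invoked here'') but which is the actual engine of the lemma. The intended route does not build an embedding of the admissible covers at all: one takes the solid $\mathfrak A \models \ZFC^-$ witnessing that $N_1$ is almost full, so that $N_1 = H_\gamma^{\mathfrak A}$ with $\gamma = \height(N_1)$ regular in $\mathfrak A$ (Lemma \ref{lemma:regularityequiv}), and forms the liftup $\langle \mathfrak A_*, k_* \rangle$ of $\langle \mathfrak A, k \rangle$, which exists by Fact \ref{fact:solidliftup} because $k$ is a cofinal map on $H_\gamma^{\mathfrak A}$ into a transitive set. Then $k_* \supseteq k$ is cofinal and $\Sigma_0$-preserving, hence $\Sigma_1$-preserving, $k_*(N_1) = H_{k_*(\gamma)}^{\mathfrak A_*} = N_0$ by Observation \ref{observation:regularitySups}, $k_*(p_i) = k(p_i)$, and $N_0$ lies in $\wfc(\mathfrak A_*)$, which is admissible, so $L_{\delta_{N_0}}(N_0) \subseteq \wfc(\mathfrak A_*)$ and the proof predicate for $\mathcal L(L_{\delta_{N_0}}(N_0))$ is computed correctly there. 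The consistency transfer is then carried out between $\mathfrak A$ and $\mathfrak A_*$ along $k_*$, with some care (which your contrapositive also glosses over) about proofs witnessed only in the ill-founded part of a solid model. So the shape of your reflection step survives, but it must be run over the liftup of the solid hull supplied by almost fullness, not over a direct extension of $k$ to the least admissible sets; as written, the central object of your proof does not exist.
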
 

\subsection{Liftups}
\label{subsec:Liftups}
The following definitions are meant to describe a method used to obtain useful embeddings, outlining a technique that is ostensibly the ultrapower construction. The way that these embeddings are constructed facilitate the use of Barwise theory to obtain the consistency of the existence of the kind of embedding required in the definition of subcompleteness. Refer to \cite[Ch.~3]{Jensen:2014} for all of the general definitions and theorems, the specific relevant necessary pieces are given here.

\begin{definition} Let $\alpha > \omega$ be a regular cardinal in $\N$. Let 
	$\overline \sigma$ be a cofinal embedding from $H^{\N}_\alpha$ to some transitive set. By a \emph{transitive liftup} of $\langle \N, \overline \sigma \rangle$ we mean a pair $\langle N_* , \sigma_* \rangle$ such that 
\begin{itemize} 
	\item $N_*$ is transitive;
	\item $\sigma_*:\N \prec N_*$ $\alpha$-cofinally;
	\item $\sigma_* \upharpoonright H_{\alpha}^{\N}= \overline \sigma$. \qedhere
\end{itemize}	
\end{definition}

Jensen \cite[Lemma 3.1]{Jensen:2014} shows  that transitive liftups, if they exist, are determined up to isomorphism. To do this he uses the following characterization of transitive liftups:
\begin{lemma}\label{lemma:liftupchar} Let $\sigma: \N \prec N$. Then,
$\sigma$ is $\alpha$-cofinal if and only if elements of $N$ are of the form $\sigma(f)(\beta)$ for some $f: \gamma \to \overline N$ where $\gamma < \alpha$ and $\beta < \sigma(\gamma)$. \end{lemma}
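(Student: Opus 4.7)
The plan is to verify both directions directly from the definition of $\alpha$-cofinality, using only elementarity of $\sigma$ and the fact that in $\N$ any set of size less than $\alpha$ can be enumerated by an ordinal below $\alpha$.

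For the forward direction, assume $\sigma:\N\prec N$ is $\alpha$-cofinal and let $x\in N$. By $\alpha$-cofinality, pick $u\in\N$ with $x\in\sigma(u)$ and $|u|^{\N}<\alpha$. Working inside $\N$, choose an ordinal $\gamma<\alpha$ and a surjection $f:\gamma\twoheadrightarrow u$; note $f\in\N$. By elementarity, $\sigma(f):\sigma(\gamma)\twoheadrightarrow\sigma(u)$ is a surjection in $N$, so there is some $\beta<\sigma(\gamma)$ with $\sigma(f)(\beta)=x$. (If $u$ is empty one can trivially force $x$ into a slightly larger $u$ before enumerating, so the degenerate case is harmless.)

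For the backward direction, assume every $x\in N$ is of the form $\sigma(f)(\beta)$ for some $f:\gamma\to\N$ with $\gamma<\alpha$ and $\beta<\sigma(\gamma)$. Given such $x$, set $u=\ran(f)\in\N$; then $|u|^{\N}\le\gamma<\alpha$. Since $\sigma(f)$ is a function with range $\sigma(\ran(f))=\sigma(u)$, we get $x=\sigma(f)(\beta)\in\sigma(u)$, witnessing that $\sigma$ is $\alpha$-cofinal at $x$.

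There is no real obstacle here; the lemma is essentially a reformulation of the definition, and the only point worth emphasizing is the use of elementarity to transport a surjection $f:\gamma\twoheadrightarrow u$ in $\N$ to a surjection $\sigma(f):\sigma(\gamma)\twoheadrightarrow\sigma(u)$ in $N$, plus the observation that the range of $f$ provides the witnessing small set $u$ in the converse direction.
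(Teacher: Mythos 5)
Your proof is correct and follows essentially the same argument as the paper: in the forward direction you transport an enumeration of the small witnessing set $u$ through $\sigma$ (the paper uses a bijection $f:\gamma\to u$ where you use a surjection, an immaterial difference, and your aside about empty $u$ is unnecessary since $x\in\sigma(u)$ already forces $u\neq\emptyset$), and in the backward direction you take $u=f``\gamma$ exactly as the paper does.
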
 
\begin{proof} 
For the forward direction, let $x \in N$, and take $u \in \overline N$ with $x \in \sigma(u)$ such that $|u| <\alpha$ in $\overline N$. Let $|u|=\gamma$, and take $f:\gamma \to u$ a bijection in $\overline N$. Then $\sigma(f):\sigma(\gamma) \to \sigma(u)$ is also a bijection in $N$ by elementarity. Since $x \in \sigma(u)$ we also have that $x$ has a preimage under $\sigma(f)$, say $\beta$. So $\sigma(f)(\beta)=x$ as desired.

For the backward direction, let $x=\sigma(f)(\beta)$ be an element of $N$, for $f:\gamma \to \overline N$ where $\gamma < \alpha$ in $\overline N$ and $\beta < \sigma(\gamma)$. Define $u = f``\gamma$. Then in $\overline N$ we have that $|u|<\alpha$. In addition we have that $x \in \sigma(u)$, since $\sigma(u)$ is in the range of $\sigma(f)$, where $x$ lies.
\end{proof}

Furthermore, Jensen shows that transitive liftups exist, provided that some extension of the original embedding exists, and have a nice factorization property.

\begin{fact}[\emph{Interpolation} {\cite[Lemma 5.1]{Jensen:2014}}]\label{fact:Interpolation} Let $\sigma:\N \prec N$ and let $\alpha \in \N$ be a regular cardinal. Then: \begin{enumerate}
	\item The transitive liftup $\langle N_*, \sigma_* \rangle$ of $\langle \N, \sigma \rest H^{\N}_\alpha \rangle$ exists.
	\item There is a unique $k_*:N_* \prec N$ such that $k_* \circ \sigma_* = \sigma$ and $k_* \rest \bigcup \sigma `` H^{\N}_\alpha = \id$.
\end{enumerate}
\end{fact}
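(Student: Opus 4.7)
The plan is to build $N_*$ inside $N$: first find an elementary substructure $X\preccurlyeq N$ that is a candidate for $k_*``N_*$, then take its transitive collapse. Define
\[
X \;=\; \set{\sigma(f)(\beta)}{f\in\N,\ f\colon\gamma\to\N,\ \gamma<\alpha,\ \beta<\sigma(\gamma)} \;\subseteq\; N.
\]
By the forward direction of Lemma~\ref{lemma:liftupchar}, $X$ is exactly what the $k_*$-image of any prospective liftup would have to be. The nontrivial step is showing $X\preccurlyeq N$ via Tarski--Vaught: given $\varphi(v,\vec x)$ and $\vec x=\bigl(\sigma(f_i)(\beta_i)\bigr)_{i<n}\in X^n$ with $f_i\colon\gamma_i\to\N$, I would work inside $\N$, fix a bijection (in $\N$) between $\prod_i\gamma_i$ and some $\delta<\alpha$ (available since $\alpha$ is regular and uncountable in $\N$), and use choice in $\N$ to define a single function $F\colon\delta\to\N$ in $\N$ sending a coded tuple $\vec\xi$ to a witness for $\varphi(\cdot,f_1(\xi_1),\dots,f_n(\xi_n))$ whenever one exists. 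Elementarity of $\sigma$ then transfers this into $N$: $\sigma(F)$ applied to the coded tuple $\vec\beta$ is a witness for $\varphi(\cdot,\vec x)$ in $N$, and lies in $X$ by construction.

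Next let $\pi\colon X\cong N_*$ be the Mostowski collapse, set $k_*=\pi^{-1}$, and define $\sigma_*=\pi\circ\sigma$. The key computation is that $\bigcup\sigma `` H_\alpha^{\N}$ is (a) contained in $X$---for $z\in\sigma(y)$ with $y\in H_\alpha^{\N}$, pick a bijection $g\colon\gamma\to y$ in $\N$ with $\gamma<\alpha$, so $z=\sigma(g)(\beta)$ for some $\beta<\sigma(\gamma)$---and (b) transitive in $N$, since $H_\alpha^{\N}$ is closed under transitive closure (using $\alpha$ regular uncountable in $\N$). Therefore $\pi$ is the identity on $\bigcup\sigma `` H_\alpha^{\N}$, which gives $k_*\rest \bigcup\sigma `` H_\alpha^{\N}=\id$ immediately, and $\sigma_*\rest H_\alpha^{\N}=\sigma\rest H_\alpha^{\N}$ upon taking $y=\{x\}$ to see $\sigma(x)\in\bigcup\sigma `` H_\alpha^{\N}$. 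For $\alpha$-cofinality, every element of $N_*$ has the form $\pi(\sigma(f)(\beta))=\sigma_*(f)(\beta)$ (using $\pi(\beta)=\beta$ since $\beta<\sigma(\gamma)\in\bigcup\sigma `` H_\alpha^{\N}$), and the backward direction of Lemma~\ref{lemma:liftupchar} applies. Elementarity of $\sigma_*$ is automatic as a composition of elementary maps, and $k_*\circ\sigma_*=\sigma$ holds by construction.

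For part (2), uniqueness drops out of the $\alpha$-cofinality of $\sigma_*$. If $k'\colon N_*\prec N$ also satisfies $k'\circ\sigma_*=\sigma$ and $k'\rest\bigcup\sigma `` H_\alpha^{\N}=\id$, then for any $z=\sigma_*(f)(\beta)\in N_*$,
\[
k'(z) \;=\; k'(\sigma_*(f))(k'(\beta)) \;=\; \sigma(f)(\beta) \;=\; k_*(z),
\]
using $k'(\beta)=\beta$ and $k'\circ\sigma_*=\sigma$, so $k'=k_*$.

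The main obstacle, modest as it is, lies in the Tarski--Vaught step: one must ensure the witness-selecting $F$ together with the ordinal-coding bijection really do live in $\N$ and have size $<\alpha$, which is precisely where the hypothesis that $\alpha$ is a regular uncountable cardinal in $\N$ is used. Once that is in hand, all remaining properties reduce to routine bookkeeping around the Mostowski collapse and the transitivity of $\bigcup\sigma `` H_\alpha^{\N}$ in $N$.
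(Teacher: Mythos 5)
You should first note that the paper does not actually prove Fact~\ref{fact:Interpolation}: it is imported from Jensen's notes as a black box, so there is no internal proof to compare against. Judged on its own, your argument is correct, and it takes a different route from Jensen's. Jensen constructs the transitive liftup abstractly, as a term model built from pairs $(f,\beta)$ with $f\in H^{\N}_\alpha$-sized domain, proves a \L o\'s-style lemma to get elementarity and well-foundedness, and only afterwards defines $k_*$ by $\sigma_*(f)(\beta)\mapsto\sigma(f)(\beta)$; that construction works from $\sigma\rest H^{\N}_\alpha$ alone and is what underlies the more general liftup existence results such as Fact~\ref{fact:solidliftup}. You instead exploit the feature of the interpolation setting that the total embedding $\sigma:\N\prec N$ is already available: you form the hull $X=\set{\sigma(f)(\beta)}{f\in\N,\ f:\gamma\to\N,\ \gamma<\alpha,\ \beta<\sigma(\gamma)}$ inside $N$, check $X\preccurlyeq N$ by Tarski--Vaught, and take $N_*$ to be its transitive collapse with $k_*=\pi^{-1}$; the identity of $\pi$ on the transitive set $\bigcup\sigma``H^{\N}_\alpha$ then gives all the agreement facts, and your uniqueness argument for $k_*$ via $\alpha$-cofinality is exactly the standard one. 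This buys a shorter, self-contained proof that avoids the term model and \L o\'s lemma (and matches in spirit the paper's own Claim~\ref{claim:N0isliftupofN}, where a liftup is recovered as the collapse of a hull of $N$); the trade-off is generality, since your method presupposes the ambient target $N$ and so cannot replace Jensen's construction where no such target is given --- though for Fact~\ref{fact:Interpolation} as stated this costs nothing. Two implicit steps are worth making explicit: $\ran(\sigma)\subseteq X$ (take $f:1\to\N$ with value $x$), which is what makes $\sigma_*=\pi\circ\sigma$ a well-defined elementary map into $N_*$; and the witness function $F$ in the Tarski--Vaught step lies in $\N$ by \textsf{Collection} plus choice in $\ZFC^-$ --- note that for that step you really only need $\alpha$ to be an uncountable cardinal of $\N$ (finite products of ordinals below $\alpha$ have size below $\alpha$), not its regularity.
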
 

For the next useful lemma, a more general notion of liftups needs to be defined - where the model produced is not necessarily transitive.
\begin{definition} Let $\mathfrak A$ be a solid model of $\ZFC^-$ and let $\tau \in \wfc(\mathfrak A)$ be an uncountable cardinal in $\mathfrak A$. Let $\sigma : H_{\tau}^\mathfrak A \To A$ cofinally, where $A$ is transitive. Then by a \textit{liftup} of $\langle \mathfrak A, \sigma\rangle$, we mean a pair $\langle {\mathfrak A}_*, \sigma_* \rangle$ such that 
\begin{itemize}
	\item $\sigma_* \supseteq \sigma$;
	\item ${\mathfrak A}_*$ is solid;
	\item $\sigma_*: \mathfrak A \To_{\Sigma_0} {\mathfrak A}_*$ $\tau$-cofinally;
	\item $A \subseteq \wfc({\mathfrak A}_*)$. \qedhere
\end{itemize}
\end{definition}
\begin{fact}[{\cite[Lemma 3.3]{Jensen:2014}}]\label{fact:solidliftup} Let $\mathfrak A$ be a solid model of $\ZFC^-$. Let $\tau > \omega$, $\tau \in \wfc(\mathfrak A)$, and let $\sigma$ be a cofinal embedding from $H_{\tau}^{\mathfrak A}$ to some transitive set. Then $\langle \mathfrak A, \sigma \rangle$ has a liftup $\langle \mathfrak A_*, \sigma_* \rangle$.
\end{fact}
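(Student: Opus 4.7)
The plan is to build $\mathfrak A_*$ by an ultrapower-like construction in which $\sigma$ itself plays the role of an ``ultrafilter'' via coded $H^{\mathfrak A}_\tau$-witnesses. As representatives, take the class $D$ of pairs $(f,a)$ with $f \in \mathfrak A$, $\dom(f) = \gamma$ for some ordinal $\gamma < \tau$, and $a \in \sigma(\gamma) \subseteq A$. The pair $(f,a)$ is intended to represent $\sigma_*(f)(a)$ in the liftup, so the goal is to recover equality and $\in$-membership of these representatives using only data available in $A$.

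For $(f,a)$ and $(g,b)$ with $\dom f = \gamma$, $\dom g = \delta$, define the coding sets
\[
h^=_{fg} = \{(\alpha,\beta) \in \gamma \times \delta : f(\alpha) = g(\beta)\}, \qquad h^\in_{fg} = \{(\alpha,\beta) : f(\alpha) \in g(\beta)\},
\]
both of which lie in $H^{\mathfrak A}_\tau$ since $\gamma \times \delta$ does. Put $(f,a) \sim (g,b) \Leftrightarrow (a,b) \in \sigma(h^=_{fg})$ and $(f,a)\, E\, (g,b) \Leftrightarrow (a,b) \in \sigma(h^\in_{fg})$, and verify that $\sim$ is an equivalence and $E$ is $\sim$-invariant by transferring the relevant first-order statements about $H^{\mathfrak A}_\tau$ through $\sigma$. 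Let $\mathfrak A_*$ be the quotient structure $D/{\sim}$ with induced $\in$-relation $E$ (together with any auxiliary predicates of $\mathfrak A$ defined in analogous coded fashion), and put $\sigma_*(x) = [(c_x,0)]$ where $c_x : 1 \to \mathfrak A$ sends $0$ to $x$. The crux is a {\L}o\'s-type theorem: for any $\Sigma_0$ formula $\varphi$ and representatives $(f_1,a_1),\ldots,(f_n,a_n)$ reindexed to share a common domain $\gamma < \tau$,
\[
\mathfrak A_* \models \varphi([(f_1,a_1)],\ldots,[(f_n,a_n)]) \iff (a_1,\ldots,a_n) \in \sigma\bigl(\{\vec\alpha : \mathfrak A \models \varphi(f_1(\alpha_1),\ldots,f_n(\alpha_n))\}\bigr),
\]
proved by induction on $\varphi$, using at each step that the requisite coding set sits in $H^{\mathfrak A}_\tau$ because $\mathfrak A \models \ZFC^-$. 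This yields $\Sigma_0$-elementarity of $\sigma_*$, and $\tau$-cofinality is immediate: every element of $\mathfrak A_*$ has the form $[(f,a)] = \sigma_*(f)(a)$ with $\dom(f) < \tau$, matching Lemma~\ref{lemma:liftupchar}.

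Finally, identify $A$ as a subset of $\wfc(\mathfrak A_*)$. Each $y \in A$ lies in $\sigma(u)$ for some $u \in H^{\mathfrak A}_\tau$ by cofinality of $\sigma$; a bijection $f : \gamma \to u$ in $\mathfrak A$ (with $\gamma < \tau$) then produces $(f,a) \in D$ with $\sigma(f)(a) = y$, and one sets $\pi(y) := [(f,a)]$. By {\L}o\'s, $\pi$ is well-defined, injective, and $\in$-preserving, so because $A$ is transitive and $\in$-well-founded, $\pi[A] \subseteq \wfc(\mathfrak A_*)$. One then redefines $\mathfrak A_*$ by replacing each $\pi(y)$ with $y$---a Mostowski-style rewriting valid since $\pi$ preserves $\in$ and $A$ is transitive. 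After this, $\sigma_*(x) = [(c_x,0)] = \pi(\sigma(x)) = \sigma(x)$ for $x \in H^{\mathfrak A}_\tau$, so $\sigma_* \supseteq \sigma$, and $\mathfrak A_*$ is solid with $A \subseteq \wfc(\mathfrak A_*)$. The main obstacle is the $\Sigma_0$ {\L}o\'s theorem itself: since $\mathfrak A_*$ may be ill-founded one cannot invoke a transitive collapse during the inductive step, so bounded quantifiers must be handled by rewriting the coding sets carefully within $H^{\mathfrak A}_\tau$---this is precisely where the hypothesis that $\tau$ is an uncountable cardinal in $\mathfrak A$ is used, ensuring closure of $H^{\mathfrak A}_\tau$ under the necessary pairing, union, and image operations.
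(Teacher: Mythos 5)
The paper does not actually prove this fact---it is quoted from Jensen's notes---and your term-model construction (representatives $(f,a)$ with $f\in\mathfrak A$, $\dom(f)=\gamma<\tau$, $a\in\sigma(\gamma)$, equality and membership read off from $\sigma$-images of coding sets in $H^{\mathfrak A}_\tau$, plus a {\L}o\'s theorem) is exactly the standard route Jensen takes, so the strategy is the right one. The $\Sigma_0$ {\L}o\'s argument and the $\tau$-cofinality claim are fine as sketched, and you correctly identify where ``$\tau$ is an uncountable cardinal of $\wfc(\mathfrak A)$'' is used.

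The genuine gap is in the final step, which is the real content of the lemma. You conclude $\pi[A]\subseteq\wfc(\mathfrak A_*)$ from ``$\pi$ is well-defined, injective, and $\in$-preserving, and $A$ is transitive and well-founded.'' That does not follow: injectivity and $\in$-preservation say nothing about the \emph{other} $E$-predecessors of $\pi(y)$, and if $\pi(y)$ had an $E$-predecessor outside $\ran(\pi)$ sitting above an ill-founded part of the term model, then $\pi(y)$ would not lie in the well-founded core at all. What you need, and must prove, is that $\ran(\pi)$ is $E$-transitive: every $[(g,b)]\mathrel{E}\pi(y)$ equals $\pi(z)$ for some $z\in y$. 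This is provable with your machinery but needs an extra recoding step: writing $\pi(y)=[(f,a)]$ with $f:\gamma\to u$ a bijection and $u\in H^{\mathfrak A}_\tau$, replace $g$ by $g'\in H^{\mathfrak A}_\tau$ defined on $\dom(g)\times\gamma$ by $g'(\beta,\alpha)=g(\beta)$ when $g(\beta)\in f(\alpha)$ (and $\emptyset$ otherwise), check $[(g,b)]=[(g',(b,a))]$ by {\L}o\'s, and set $z=\sigma(g')(b,a)\in\sigma(f)(a)=y$; only then does $\in$-induction on $A$ put $\pi[A]$ inside $\wfc(\mathfrak A_*)$, and only then is the ``Mostowski-style rewriting'' an isomorphic relabelling. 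Relatedly, solidity of $\mathfrak A_*$ needs more than relabelling $\pi[A]$: you must know that $E$ is extensional (indeed that $\mathfrak A_*\models\ZFC^-$, which the paper's definition of solidity presupposes) so that the entire well-founded part can be transitivized, and $\Sigma_0$-{\L}o\'s alone does not give this---though full {\L}o\'s does go through here, since $\sigma$ is elementary on $H^{\mathfrak A}_\tau$ and every coding set $\set{\vec\alpha}{\mathfrak A\models\varphi(f_1(\alpha_1),\dots,f_n(\alpha_n))}$ is a subset of $\gamma^n$ and hence lies in $H^{\mathfrak A}_\tau$. These points are fixable within your framework, but as written they are asserted rather than proved, and they are precisely where the hypotheses (solidity of $\mathfrak A$, $\tau$ a cardinal in $\wfc(\mathfrak A)$) earn their keep.
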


The following lemma states that transitive liftups of full models are almost full.

\begin{lemma}[{\cite[Lemma 2.1]{Jensen:2014}}] \label{lemma:liftupfull} Let  $N= L_\tau[A]$ and $\sigma: \N \prec N$ where $\N$ is full, and let $\overline \sigma=\sigma\rest H_\alpha^{\N}$ for some $\alpha$. Suppose that $\langle N_*, \sigma_* \rangle$ is a transitive liftup of $\langle \N, \overline \sigma \rangle$. Then $N_*$ is almost full. \end{lemma}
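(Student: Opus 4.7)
The plan is to lift the fullness witness $L_\gamma(\N)$ through $\bar\sigma$ using the solid liftup construction, and show that the resulting structure witnesses the almost-fullness of $N_*$.

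First, by fullness of $\N$, I fix $\gamma$ with $L_\gamma(\N) \models \ZFC^-$ and $\N$ regular in $L_\gamma(\N)$. By Lemma~\ref{lemma:regularityequiv} applied inside $L_\gamma(\N)$, $\N = H_\beta^{L_\gamma(\N)}$ for $\beta = \height(\N)$ regular in $L_\gamma(\N)$. Since $\alpha$ is a regular cardinal of $\N$, $\alpha < \beta$, giving $H_\alpha^{L_\gamma(\N)} = H_\alpha^{\N}$, so $\bar\sigma$ may be viewed as a cofinal map out of $H_\alpha^{L_\gamma(\N)}$. Since $L_\gamma(\N)$ is transitive, hence solid, Fact~\ref{fact:solidliftup} produces a solid liftup $\langle \mathfrak A_*, \tilde\sigma\rangle$ of $\langle L_\gamma(\N), \bar\sigma\rangle$: here $\tilde\sigma \supseteq \bar\sigma$, the map $\tilde\sigma \colon L_\gamma(\N) \to_{\Sigma_0} \mathfrak A_*$ is $\alpha$-cofinal, and $\ran\bar\sigma \subseteq \wfc(\mathfrak A_*)$.

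Second, I would upgrade $\tilde\sigma$ to a fully elementary embedding using the standard fact that an $\alpha$-cofinal $\Sigma_0$-embedding between $\ZFC^-$-models is elementary: any existential formula can be bounded by a representative in $H_\alpha^{L_\gamma(\N)}$, since $\alpha$-cofinality together with \textsf{Collection} in $L_\gamma(\N)$ reflects the quantifier to a bounded one. In particular $\mathfrak A_* \models \ZFC^-$. Next, $\tilde\sigma \rest \N$ is an $\alpha$-cofinal elementary extension of $\bar\sigma$ into $\mathfrak A_*$, whose target $\tilde\sigma(\N)$ is transitive (a $\Sigma_0$-fact about $\N$ in $L_\gamma(\N)$ preserved by $\tilde\sigma$) and in fact lies inside $\wfc(\mathfrak A_*)$: every element of $\tilde\sigma(\N) = \tilde\sigma(H_\beta^{L_\gamma(\N)})$ is reachable via the $\alpha$-cofinality of $\tilde\sigma$ from the well-founded image $\bar\sigma``H_\alpha^{\N}$. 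The uniqueness of transitive liftups (Jensen~\cite[Lemma 3.1]{Jensen:2014}) then identifies $\langle \tilde\sigma(\N), \tilde\sigma\rest\N\rangle$ with $\langle N_*, \sigma_*\rangle$, so $N_* \in \wfc(\mathfrak A_*)$.

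Finally, regularity of $N_*$ in $\mathfrak A_*$ is inherited from regularity of $\N$ in $L_\gamma(\N)$ by the full elementarity of $\tilde\sigma$; thus $\mathfrak A_*$ witnesses the almost-fullness of $N_*$. I expect the main obstacle to be upgrading $\tilde\sigma$ from $\Sigma_0$ to fully elementary, since without this step neither $\ZFC^-$ nor regularity transfers to $\mathfrak A_*$. A secondary subtlety is verifying that $N_*$ and the internal image $\tilde\sigma(\N)$ literally coincide as transitive sets inside $\wfc(\mathfrak A_*)$, rather than merely being abstractly isomorphic via the Mostowski collapse.
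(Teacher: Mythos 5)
Your overall strategy differs from the paper's at one crucial point, and that is exactly where the gap is. The paper applies Fact~\ref{fact:solidliftup} to $\mathfrak A = L_\gamma(\N)$ along the \emph{full} liftup map $\sigma_*\colon \N = H_\eta^{\mathfrak A}\To N_*$ (so $\tau=\eta=\height(\N)$); then $N_*\subseteq\wfc(\mathfrak A_*)$ is part of the conclusion of Fact~\ref{fact:solidliftup}, since $N_*$ is the transitive target of the cofinal map being lifted, and the only work left is to show $N_*=H_{\eta_*}^{\mathfrak A_*}$ (using $\eta$-cofinality of ${\sigma_*}_*$ and regularity of $\N$ in $L_\gamma(\N)$), which gives regularity of $N_*$ in $\mathfrak A_*$. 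You instead lift along $\overline\sigma$ with $\tau=\alpha$, so Fact~\ref{fact:solidliftup} only places the (small) transitive target of $\overline\sigma$ inside $\wfc(\mathfrak A_*)$, and you must then argue separately that $\tilde\sigma(\N)\subseteq\wfc(\mathfrak A_*)$. Your justification for that step --- ``every element of $\tilde\sigma(\N)$ is reachable via the $\alpha$-cofinality of $\tilde\sigma$ from the well-founded image $\overline\sigma``H_\alpha^{\N}$'' --- does not prove anything: by $\alpha$-cofinality \emph{every} element of $\mathfrak A_*$, including the ill-founded part, lies $\in_{\mathfrak A_*}$-inside some $\tilde\sigma(u)$ with $u$ small, so reachability in this sense cannot place anything in the well-founded core. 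Note also that your argument as written never uses the hypothesis that the transitive liftup $\langle N_*,\sigma_*\rangle$ exists until the final appeal to uniqueness; but uniqueness only applies after you know $\langle\tilde\sigma(\N),\tilde\sigma\rest\N\rangle$ is (isomorphic to) a \emph{transitive} liftup, i.e.\ after well-foundedness of $(\tilde\sigma(\N),\in_{\mathfrak A_*})$ is established --- which is precisely the point at issue, and it is not free.

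The route can be repaired, but only with real work: using regularity of $\N$ in $L_\gamma(\N)$ one shows every $\in_{\mathfrak A_*}$-member of $\tilde\sigma(\N)$ has the form $\tilde\sigma(h)(\xi)$ with $h\colon\gamma'\To\N$, $h\in\N$, $\gamma'<\alpha$ and $\xi<\overline\sigma(\gamma')$ a genuine ordinal, and then a \L o\'s-style computation identifies $(\tilde\sigma(\N),\in_{\mathfrak A_*})$ with the term model whose well-foundedness is exactly the hypothesized existence of $N_*$; after collapsing, uniqueness of transitive liftups gives $\tilde\sigma(\N)=N_*$. That is essentially re-proving Jensen's interpolation analysis inside $\mathfrak A_*$. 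The cleaner fix is the paper's choice of $\tau$: lift $L_\gamma(\N)$ along $\sigma_*$ itself, so that $N_*\subseteq\wfc(\mathfrak A_*)$ is immediate and only the computation $N_*=H_{\eta_*}^{\mathfrak A_*}$ remains. Your remaining ingredients (the identification $H_\alpha^{L_\gamma(\N)}=H_\alpha^{\N}$, the elementarity upgrade of the $\Sigma_0$ $\tau$-cofinal map, and transferring regularity to $\mathfrak A_*$) are fine and are also implicitly used by the paper.
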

\begin{proof}
	Let $L_\gamma(\N)$ witness the fullness of $\N$. We will now apply Interpolation (Fact \ref{fact:Interpolation}) to $\mathfrak A= L_\gamma(\N)$, which makes sense since certainly $\mathfrak A$ is a transitive model of $\ZFC^-$. Additionally, by Lemma \ref{lemma:regularityequiv} we have that $\N = H_\eta^{\mathfrak A}$, where $\eta = \height(\N)$. 
	Since $\langle N_*, \sigma_* \rangle$ is a transitive liftup, we have that $\sigma_*: H_\eta^{\mathfrak A} \prec N_* \text{ cofinally,}$ where $N_*$ is transitive. Thus since $\mathfrak A$ is transitive, $\langle \mathfrak A, \sigma_* \rangle$ has a liftup $\langle \mathfrak A_*, {\sigma_*}_* \rangle$, where $\mathfrak A_* \models \ZFC^-$ since $\mathfrak A$ does, $\mathfrak A_*$ is solid, where ${\sigma_*}_* : \mathfrak A \prec \mathfrak A_* \text{ $\eta$-cofinally.}$ 
	We have that  $N_* \subseteq \wfc({\mathfrak A}_*)$ and $\eta_*= {\sigma_*}_*(\eta)=\height(N_*)$ is regular since $\eta$ is. Furthermore, we will show that $N_* = H_{\eta_*}^{\mathfrak A_*}$, completing the proof:
	
	Certainly it is the case that $N_* \subseteq H_{\eta_*}^{\mathfrak A^*}$. 
	But if $x \in H_{\eta_*}$ in $\mathfrak A_*$, then by regularity we have that $x \in {\sigma_*}_*(u)$ in $\mathfrak A_*$, where $u \in \mathfrak A$, and $|u| < \eta$ in $\mathfrak A$. Let $v=u \cap H_\eta$ in $\mathfrak A$. Then $v \in \N$, since $\N$ is regular in $\mathfrak A$. But then $x \in \sigma_*(v) \in N_*$. So $x \in N_*$.
\end{proof}

\section{Generalized Diagonal Prikry Forcing}
\label{sec:GenDiagonalPrikryForcing}
Generalized diagonal Prikry forcing is designed to add a point below every measurable cardinal in an infinite discrete set of measurable cardinals. In this section it is shown that such forcing notions are subcomplete.
\begin{definition}
Let $D$ be an infinite discrete set of measurable cardinals, meaning a set of measurable cardinals that does not contain any of its limit points. For $\kappa \in D$ let $U(\kappa)$ be a normal measure on $\kappa$, and let $\U$ denote the sequence of the $U(\kappa)$'s.

Define $\D=\D(\U)$, \emph{generalized diagonal Prikry forcing} from the list of measures $\U$, by taking conditions of the form  
$( s, A )$ satisfying the following:
\begin{itemize}
	\item The \textit{stem} of the condition, $s$, is a function with domain in $[D]^{<\omega}$ taking each measurable cardinal $\kappa \in \dom(s)$ to some ordinal $s(\kappa) < \kappa$.
	\item The \textit{upper part} of the condition, $A$, is a function with domain $D \setminus {\dom(s)}$ taking each measurable cardinal $\kappa \in \dom(A)$ to some measure-one set $A(\kappa) \in U(\kappa)$.
\end{itemize}
We say that $( s, A ) \leq ( t, B )$ so long as 
\begin{itemize}
	\item $s \sqsupseteq t$, i.e., $s$ is an end extension of $t$;
	\item the points in $s$ not in $t$ come from $B$, i.e., for all $\kappa \in \dom(s) \setminus \dom(t)$, $s(\kappa) \in B(\kappa)$;
	\item for all $\kappa \in \dom(t)$, $A(\kappa) \subseteq B(\kappa)$.
\end{itemize}
If $G$ is a generic filter for $\D$, then its associated $\D$-generic sequence is \[S = S_G = \bigcup \set{ s }{ \exists A \ ( s, A ) \in G }.\qedhere\]
\end{definition}

Here the definition of $\D(\U)$ differs from that of generalized Prikry foricing as given by Fuchs~\cite{Fuchs:2005kx}. The main difference is that here only one point is added below each measurable cardinal $\kappa \in D$, which is done for simplicity's sake. It is not hard to see that the following theorem also shows that the forcing adding countably many points below each measurable cardinal in $D$ is subcomplete. Adding countably many points below each measurable cardinal in $D$ would collapse the cofinality of each $\kappa \in D$ to be $\omega$, as one expects of a Prikry-like forcing. 

Also in the above definition it hasn't been enforced that the stem a condition consists only of ordinals that are wedged between successive measurables in $D$; ie. for $\kappa \in D$, it is not explicitly insisted that $s(\kappa) \in \left[\sup(D\cap \kappa), \kappa\right)$. However, it is dense on a tail of the generic filter of $\D(\U)$ for the conditions to be that way, since conditions may be strengthened by restricting their upper parts to a tail. 
Thus such a restriction may be freely added to the following genericity condition on $\D(\U)$.

\begin{fact}[Fuchs~{\cite[Thm.~1]{Fuchs:2005kx}}] \label{fact:diagprikrymathias} Let $D$ be an infinite discrete set of measurable cardinals in $M$, with $\U$ a corresponding list of measures $\langle U(\kappa) \;|\; \kappa \in D \rangle$. Then an increasing sequence of ordinals $S = \seq{ S(\kappa) }{ \kappa \in D }$ in $V$, where for each $\kappa \in D$, we have that $$\sup(D \cap \kappa) < S(\kappa) < \kappa,$$ is a $\D(\U)$-generic  sequence if and only if for all $\mathcal X = \seq{ X_\kappa \in U(\kappa) }{ \kappa \in D } \in M$, the set $\set{ \kappa \in D }{ S(\kappa) \notin X_\kappa }$ is finite.
\end{fact}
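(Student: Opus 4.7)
Suppose $S = S_G$ for $G$ a $\D(\U)$-generic filter over $M$, and fix $\mathcal X = \langle X_\kappa : \kappa \in D\rangle \in M$ with each $X_\kappa \in U(\kappa)$. I first verify that
$$E_{\mathcal X} := \{(s,A) \in \D(\U) : A(\kappa) \subseteq X_\kappa \text{ for all } \kappa \in \dom(A)\}$$
is dense in $\D(\U)$: given any $(t,B)$, strengthen to $(t,B')$ by setting $B'(\kappa) := B(\kappa) \cap X_\kappa$, which lies in $U(\kappa)$ as the intersection of two measure-one sets. Hence $(t,B') \leq (t,B)$ lies in $E_{\mathcal X}$. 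By genericity, pick $(s,A) \in G \cap E_{\mathcal X}$. For every $\kappa \in D \setminus \dom(s)$, $S(\kappa) \in A(\kappa) \subseteq X_\kappa$, so $\{\kappa \in D : S(\kappa) \notin X_\kappa\}$ is contained in the finite set $\dom(s)$.

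\textbf{Backward direction.} Assume the Mathias-like property. Set
$$G_S := \{(s,A) \in \D(\U) : s \subseteq S \text{ and } S(\kappa) \in A(\kappa) \text{ for every } \kappa \in \dom(A)\};$$
the filter axioms for $G_S$ are routine. To establish $\D(\U)$-genericity of $G_S$ over $M$, fix a dense $D^* \in M$; the task is to produce an element of $D^* \cap G_S$. Working inside $M$, I use the axiom of choice to assign to each finite partial stem $t$ (respecting the value bound $t(\kappa)<\kappa$) an upper part $A_t$ with $(t,A_t) \in D^*$ whenever some such witness exists. I then define a sequence $\mathcal X \in M$ by the diagonal-style intersection
$$X(\kappa) := \bigcap \{A_t(\kappa) : A_t \text{ defined and } \dom(t) \subseteq D \cap \kappa\}.$$
The discreteness of $D$ combined with the inaccessibility of $\kappa$ gives $\sup(D \cap \kappa) < \kappa$, so $|D \cap \kappa| < \kappa$; thus fewer than $\kappa$ sets are intersected, and $\kappa$-completeness of $U(\kappa)$ yields $X(\kappa) \in U(\kappa)$. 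Applying the Mathias hypothesis to $\mathcal X$, the set $F := \{\kappa \in D : S(\kappa) \notin X(\kappa)\}$ is finite, and the plan is to choose a finite $F^* \supseteq F$ so that, setting $s := S \rest F^*$, the condition $(s, A_s)$ lies in $D^* \cap G_S$.

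\textbf{Main obstacle.} The delicate point is arranging $S(\kappa) \in A_s(\kappa)$ for every $\kappa \in D \setminus \dom(s)$. For $\kappa > \max\dom(s)$ this follows from the construction: $\dom(s) \subseteq D \cap \kappa$, so $A_s$ is one of the sets entering the intersection defining $X(\kappa)$, yielding $X(\kappa) \subseteq A_s(\kappa)$, and hence $S(\kappa) \in X(\kappa) \subseteq A_s(\kappa)$ whenever $\kappa \notin F$. The trouble is with $\kappa \in D$ satisfying $\kappa < \max\dom(s)$ and $\kappa \notin \dom(s)$: the diagonal intersection does not directly control $A_s(\kappa)$, and since $D \cap \max\dom(s)$ may be infinite, one cannot simply absorb all such coordinates into $\dom(s)$. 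I expect to handle this by invoking a version of the Prikry property for $\D(\U)$ — established using the normality and $\kappa$-completeness of each $U(\kappa)$, with discreteness of $D$ again supplying the key cardinal-arithmetic input — to refine the $A_t$'s further so that the required inclusion holds uniformly at every coordinate; the remaining finitely many bad coordinates can then be absorbed into the stem, each step justified by reapplying the Mathias hypothesis to the refined measure-one sequence. This is the technical heart of the argument and parallels the classical Mathias characterization of Prikry-forcing genericity.
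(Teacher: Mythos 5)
The paper does not prove this Fact; it is quoted from Fuchs, so your attempt can only be judged on its own merits. Your forward direction is fine (the density of the set of conditions whose upper parts refine $\mathcal X$, plus compatibility inside the filter, is exactly the standard argument). The backward direction, however, has a genuine gap, and it is precisely the part you flag as the ``technical heart'': as written, the argument does not go through. Two concrete problems. First, your choice function $t \mapsto A_t$ is only defined when $D^*$ contains a condition whose stem is \emph{exactly} $t$, and nothing you have said shows that for the intended stem $s = S \rest F^*$ any such condition exists: density of $D^*$ only gives extensions of $(s,\text{full upper part})$, and these may have strictly longer stems, so $(s,A_s)$ may simply fail to exist and the plan collapses at the first step. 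Second, even when $A_s$ exists, the coordinates $\kappa \in D\setminus\dom(s)$ with $\kappa < \max\dom(s)$ are not controlled by your diagonal-style intersection, and $D\cap\max\dom(s)$ can be infinite, so these bad coordinates cannot be absorbed into the stem. You propose to resolve both issues ``by invoking a version of the Prikry property for $\D(\U)$,'' but that lemma is neither stated nor proved, and it is not a routine refinement: it requires a Rowbottom/normality-style partition argument at each $\kappa$ together with an argument handling infinitely many coordinates simultaneously, which is essentially the content of Fuchs's Theorem 1 itself. So the proposal reduces the statement to an unproved lemma of comparable difficulty rather than proving it.

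A smaller point: your cardinality count for $X(\kappa)$ is slightly off as stated. The number of stems $t$ with $\dom(t)\subseteq D\cap\kappa$ is not bounded by $|D\cap\kappa|$ but by a sum of finite products $\prod_{\kappa'\in e}\kappa'$ over finite $e\subseteq D\cap\kappa$; this is still below $\kappa$, but one needs the inaccessibility (regularity plus strong limitness) of $\kappa$ for the products and the sum, not merely $\sup(D\cap\kappa)<\kappa$. That part is fixable; the missing Prikry-style lemma is the real gap.
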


The above genericity criterion on generalized diagonal Prikry forcing is similar to the Mathias genericity criterion for Prikry forcing.

\begin{theorem} \label{theorem:main} Let $D$ be an infinite discrete set of measurable cardinals and let $\U = \seq{ U(\kappa) }{\kappa \in D }$ be a list of normal measures associated to the
measurable cardinals in $D$. 
The generalized diagonal Prikry forcing $\D=\D(\U)$ is subcomplete. \end{theorem}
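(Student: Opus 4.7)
The plan is to follow Jensen's template (\cite[Ch.~3]{Jensen:2014}) for showing Prikry forcing is subcomplete, adapted to the diagonal setting by combining the liftup construction of Section~\ref{subsec:Liftups} with Barwise theory. After unpacking the standard setup---where $\sigma: \N \prec N$ with $\N$ countable and full and $\G$ is $\bar\D$-generic over $\N$ with generic sequence $\bar S = \seq{\bar S(\bar\kappa)}{\bar\kappa \in \bar D}$---I would first choose a regular cardinal $\alpha \in \N$ with $\bar\D, \bar\U, \bar D \in H_\alpha^\N$ and use Fact~\ref{fact:Interpolation} to form the transitive liftup $\langle N^*, \sigma^* \rangle$ of $\langle \N, \sigma \rest H_\alpha^\N \rangle$. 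This yields an interpolation embedding $k: N^* \prec N$ with $k \circ \sigma^* = \sigma$ and $k \rest \bigcup \sigma `` H_\alpha^\N = \id$. By Lemma~\ref{lemma:liftupfull}, $N^*$ is almost full, and inside it one has a copy $\D^* = \sigma^*(\bar\D)$ of diagonal Prikry forcing whose measurables $\sigma^*(\bar D)$ are sent by $k$ to $\sigma(\bar D) \subseteq D$, with $k$ acting as the identity on these.

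Next, I would design the subcompleteness condition $p \in \D$ so that for any $\D$-generic $G \ni p$, with generic sequence $S$, one can extract inside $V[G]$ a filter $G^* \in V[G]$ that is $\D^*$-generic over $N^*$ and contains $\sigma^* `` \G$. A natural starting point is $p = (\sigma(\bar t), \sigma(\bar A))$ for some $(\bar t, \bar A) \in \G$, with the upper part further refined to ensure that tail coordinates of $S$ are drawn from sets of the form $k(\mathcal X^*)$ for the relevant $N^*$-sequences of measure-one sets. The candidate generic sequence for $G^*$ is read off $S$: at $\kappa = \sigma(\bar\kappa) \in \sigma `` \bar D$ it would equal $\sigma(\bar S(\bar\kappa))$, which is exactly what $\sigma^* `` \G$ forces, and at the remaining coordinates in $\sigma^*(\bar D)$ it would be recovered via $k$. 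The Mathias-style criterion (Fact~\ref{fact:diagprikrymathias}) is the key tool for promoting this candidate to $\D^*$-genericity: any $N^*$-side measure-one sequence $\mathcal X^* \in N^*$ is carried by $k$ to a $V$-side measure-one sequence $k(\mathcal X^*)$ on $D$, so the $V$-genericity of $S$ translates via $k$ into finiteness of the deviation at coordinates in $\sigma^*(\bar D)$.

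Given $G^*$, the embedding $\sigma^*$ lifts in $V[G]$ to $(\sigma^*)^+: \N[\G] \prec N^*[G^*]$. To produce the required $\sigma' \in V[G]$ mapping directly into $N$, I would then formulate a $\Sigma_1$ $\in$-theory $\mathcal L(L_{\delta_N}(N))$ asserting the existence of an embedding $\sigma'$ satisfying clauses (1)--(4) of Definition~\ref{definition:SC}, with $G$ among the parameters. The liftup data $(\sigma^*, (\sigma^*)^+)$ together with the almost-fullness of $N^*$ witness a model of the analogous theory $\mathcal L(L_{\delta_{N^*}}(N^*))$, making it consistent by Fact~\ref{fact:correctness}; the Transfer Lemma (Fact~\ref{fact:Transfer}) applied along $k: N^* \prec N$ then transfers consistency up to $\mathcal L(L_{\delta_N}(N))$, and Barwise Completeness (Fact~\ref{fact:completeness}) in $V[G]$ produces a solid model whose interpretation of the constant for $\sigma'$ yields the required embedding via Fact~\ref{fact:PointofBasicAxioms}. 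The hard part will be the second step: simultaneously engineering $p$ and verifying the Mathias criterion for $G^*$ via $k$, so that the countably many coordinates in $\sigma `` \bar D$ (where $G^*$ is forced to match $\sigma^* `` \G$) and the remaining coordinates in $\sigma^*(\bar D)$ (where only a tail measure-one condition is needed) are both controlled by a single condition in $\D$.
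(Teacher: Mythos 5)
Your overall template (liftups plus Barwise theory plus the Mathias-style criterion) is the right one, but the core of your plan breaks at the step you yourself flag as ``the hard part,'' and it breaks for a structural reason, not a technical one. You take a single liftup $\langle N^*,\sigma^*\rangle$ of $\langle \N,\sigma\rest H^{\N}_\alpha\rangle$ with $\alpha$ large enough that $\overline{\D},\overline{\U}\in H^{\N}_\alpha$, so $\alpha$ lies well above $\sup\overline D$. With that choice, producing (below any condition $p$, inside any $V[G]$) a filter $G^*$ that is $\D^*$-generic over $N^*$ and contains $\sigma^*``\,\G$ is not merely unproven but impossible. Since $\theta$ is chosen with $[\delta]^{<\omega_1}\in H_\theta$ and $H_\theta\subseteq N$, the countable set $T=\set{\sigma(\S(\overline\kappa))}{\overline\kappa\in\overline D}$ is an element of $N$, and because $\alpha$ exceeds the hereditary size of $\overline\D$ (hence $((\sup\overline D)^{\aleph_0})^{\N}$), the set $([\sup D]^{\leq\omega})^{N}\ni T$ is contained in $\bigcup_{u\in H^{\N}_\alpha}\sigma(u)\subseteq N^*$. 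Noting that $\sigma^*(\overline\U)=\sigma(\overline\U)=\U$ for your $\alpha$, the sequence $\seq{\kappa\setminus T}{\kappa\in D}$ is then a measure-one sequence lying in $N^*$, and any candidate generic whose coordinates on $\sigma``\,\overline D$ take the prescribed values $\sigma(\S(\overline\kappa))$ violates the criterion of Fact~\ref{fact:diagprikrymathias} for it at every one of those infinitely many coordinates. Your transfer mechanism (push $\mathcal X^*$ through $k$ and quote $V$-genericity of $S$) only controls the coordinates where $G^*$ copies $S$; it says nothing about the coordinates in $\sigma``\,\overline D$, which are exactly the problematic ones. The paper's proof avoids this by taking the auxiliary liftup at $\overline\kappa(0)$, the \emph{first} measurable of $\overline D$: $\overline\kappa(0)$-cofinality lets an arbitrary measure-one sequence of $N_1$ be covered by $\sigma_1(w)$ with $|w|<\overline\kappa(0)$ in $\N$, intersected coordinatewise (legitimate since every measurable of $D_1$ is $\geq\kappa_1(0)$), and pulled back by elementarity to a measure-one sequence of $\N$, so that genericity of $\S$ over $\N$ finishes the argument. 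Relatedly, no single condition can force an arbitrary $V$-generic $S$ to accommodate prescribed values at infinitely many coordinates; the paper instead manufactures one particular $V$-generic sequence $S$ as the interpretation $\mathring S^{\mathfrak A}$ in a Barwise model of $\mathcal L_0$ (obtained after collapsing $L_{\delta_{N_0}}(N_0)$ to be countable), and the subcompleteness condition is extracted only afterwards, via the forcing theorem applied to the statement true in $V[S]$.

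The endgame also does not go through as written. The Transfer Lemma (Fact~\ref{fact:Transfer}) requires a \emph{cofinal} embedding, and the interpolation map $k:N^*\prec N$ is in general not cofinal in $N$ (its range can be bounded, e.g.\ when $\cof(\tau)$ is large); that is why the paper transfers only along $k_1:N_1\prec N_0$, which is cofinal because $\sigma_0$ is. Moreover, your final theory is to be $\Sigma_1(L_{\delta_N}(N))$ ``with $G$ among the parameters,'' but $G\notin L_{\delta_N}(N)$ (indeed $G\notin V$), so neither the Transfer Lemma nor Barwise Completeness applies to it in that form; the paper instead defines $\mathcal L'$ over an admissible structure $\M=\langle H^{V[S]}_\lambda;\,N,\sigma,S,\dots\rangle$ inside $V[S]$, witnesses its consistency with the auxiliary map $k_0\circ\mathring\sigma^{\mathfrak A}$ living in the collapse extension, and then passes to a countable $\widetilde\M\prec\M$ and applies Barwise Completeness there to land $\sigma'$ in $V[S]$. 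Finally, clause (3) of Definition~\ref{definition:SC}, namely $\sk{N}{\delta}{\sigma'}=\Sk{N}{\delta}{X}$, is never addressed in your sketch; in the paper it is secured by the first liftup $N_0\cong\Sk{N}{\delta}{X}$ together with Lemma~\ref{lemma:liftupchar}, which shows that any $\overline\kappa(0)$-cofinal embedding into $N_0$ generates all of $N_0$ from its range and ordinals below $\delta$. So your single large liftup serves neither the genericity transfer nor the Skolem-hull clause, and both of these are exactly where the two liftups $N_0$ and $N_1$ of the paper earn their keep.
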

\begin{proof} 
Let $\theta > \delta(\D) =\delta$ be large enough, so that $[\delta]^{<\omega_1} \in H_\theta$. 

First of all, it must be the case that $\delta$, the weight of $\D$, is larger than all of the measurable cardinals in $D$.
\begin{claimno} $\delta \geq \sup D$. \end{claimno}
\begin{proof}[Pf.] Suppose instead that there is a dense set $E \subseteq \D$ such that $\sup D \geq \kappa^* >|E|$ for some $\kappa^* \in D$. Then for each condition $(s,A) \in E$ either $\kappa^* \in \dom s$ or $\kappa^* \in \dom A$. So taking $E^*=\set{ (s, A) \in E }{ \kappa^* \in \dom s } \subseteq E$, since $|E^*| < \kappa^*$ as well, there is an $\alpha<\kappa^*$ such that $\sup_{(s,A) \in E^*} s(\kappa^*)=\alpha$. Let $p=(t, B) \in \D$ be defined so that $t(\kappa^*)=\alpha$ and $B(\kappa)=\kappa$ for all $\kappa \in D \setminus \{ \kappa^*\}$. Then $p$ cannot be strengthened by any condition in $E$ since $\kappa^*$ is not in any of the stems of conditions in $E$. So dense subsets of $\D$ must have size at least $\sup D$.\end{proof}

Let $\nu = \delta^+$. Let $\kappa(0)$ be the first measurable cardinal in $D$. 
In order to show that $\D$ is subcomplete, let $c$ be a set and suppose we are in the standard setup: \begin{itemize}
	\item $N$ is a suitable model for $\D, c$ above $\theta$;
	\item $\sigma: \N \cong X \preccurlyeq N$ where $X$ is countable and $\N$ is full;
	\item $\sigma(\overline \theta, \overline{\D}, \overline{\U}, \overline c)=\theta, \D, \U, c$.
\end{itemize}
By our requirement on $\theta$ being large enough, we've ensured that $N$ is closed under countable sequences of ordinals less than $\delta$.

In what follows, we will be taking a few different transitive liftups of restrictions of $\sigma$, and it will useful to keep track of embeddings between $\N$ and $N$ pictorially. Figure \ref{figure:N} shows the situation we are currently in.

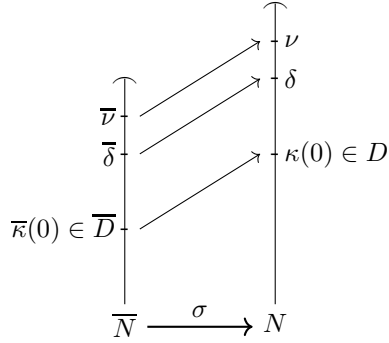
\begin{figure}[h!] 
\begin{tikzpicture}
\draw (-1,0)--(-1,3);
\node [below] at (-1,0) {$\N$};
\node at (-1,2.95) {$\frown$};

\node [left] at (-1,1) {$\overline \kappa(0) \in \overline D$};
\node at (-1,1) {-};
\node [left] at (-1,2) {$\overline \delta$};
\node at (-1,2) {-};
\node [left] at (-1,2.5) {$\overline \nu$};
\node at (-1,2.5) {-};

\draw [->] (-.8,1)--(.8,2);
\draw [->] (-.8,2)--(.8,3);
\draw [->] (-.8,2.5)--(.8,3.5);

\node [right] at (1,2) {$\kappa(0) \in D$};
\node at (1,2) {-};
\node [right] at (1,3) {$\delta$};
\node at (1,3) {-};
\node [right] at (1,3.5) {$\nu$};
\node at (1,3.5) {-};

\draw (1,0)--(1,4);
\node [below] at (1,0) {$N$};
\node at (1,3.95) {$\frown$};

\draw [->, thick] (-.7,-0.3)--(.7,-0.3);
\node [above] at (0,-0.3) {$\sigma$};
\end{tikzpicture}
\caption{Diagram depicting $\sigma$.}\label{figure:N} 
\end{figure}

Here we place bars on everything that is relevant on the $\N$ side of the embedding. In particular, $\sigma(\overline \delta)=\delta$, $\overline \nu = {\overline \delta^+}^{\N}$, $\overline D$ is the discrete set of measurables in $\N$ that each measure in $\overline \U$ comes from, and $\overline \kappa(0)$ is the first measurable in $\overline D$, in the sense of $\N$. 
Toward showing that $\D$ is subcomplete, we are additionally given some $\G \subseteq \overline{\D}$ that is generic over $\N$. Rather than working with $\G$, we will work with $\S = \seq{ \S(\overline \kappa) }{ \overline \kappa \in \overline D }$, its associated $\overline \D$-generic sequence. 

Letting $C=\Sk{N}{\delta}{X}$, in order to show that $\D$ is subcomplete, we will be done if we can show the following Main Claim is satisfied.

\begin{claim} There is a $\D$-generic sequence $S$ and a map $\sigma' \in V[S]$ such that: \begin{enumerate}
	\item \label{item:mainclaimsigma'elem} $\sigma': \N \prec N$;
	\item \label{item:mainclaim2} $\sigma'(\overline \theta, \overline{\D}, \overline{\U}, \overline c)=\theta, \D, \U, c$;
	\item \label{item:MainClaimC=Sk} $\sk{N}{\delta}{\sigma'} = C$;
	\item \label{item:mainclaimlifts} $\sigma' `` \S \subseteq S$.
\end{enumerate} \end{claim}
\begin{proof}[Pf.]  Ultimately this proof amounts to showing that a certain 
$\in$-theory, $\mathcal L'$, which posits the existence of such a $\sigma'$, is consistent. Such an embedding $\sigma'$ cannot exist in $V$ (otherwise the forcing $\D$ would be countably closed, which we know is not the case), but it will exist in an extension of the form $V[S]$, where $S$ is $\D$-generic sequence that we will find later.
Once we have such a suitable $V[S]$, we will find an appropriate admissible structure in $V[S]$ for which the $\in$-theory $\mathcal L'$, defined below, has a model.

\begin{definition}[$\mathcal L'$] \label{definition:T} Let $S$ be a $\D$-generic sequence. For an admissible structure $\M$ with $S, \overline S, \sigma, \N, N, \theta, \D, \U, c \in \M$ we define the $\in$-theory $\mathcal L'(\M)$ as follows.
\begin{description}
	\item[predicate] $\in$.
	\item[constants] $\dot{\sigma}, \underline x$ for $x \in \M$.
	\item[axioms] \begin{itemize} \item $\ZFC^-$ and \textsf{Basic Axioms};
		\item $\dot \sigma : \overline{\underline N} \prec \underline N$;
		\item $\dot{\sigma}(\overline{\underline{\theta}}, \overline{\underline{\D}}, \overline{\underline{\U}}, \overline{\underline c})=\underline{\theta}, \underline{\D}, \underline{\U}, \underline{c}$;
		\item $\sk{\underline N}{\underline{\delta}}{\dot \sigma} = \sk{\underline N}{\underline \delta}{\underline \sigma}$;
		\item $\dot \sigma ``\overline{\underline S} \subseteq \underline S$.
	\end{itemize}
\end{description}
\end{definition}

The $\in$-theory $\mathcal L'$ is $\Sigma_1(\M)$, since all of the axioms are $\M$-finite except for the \textsf{Basic Axioms}, which altogether are $\M$-$re$. Recall that by Fact \ref{fact:solidliftup}, the basic axioms make sure that any model $\mathfrak A$ of $\mathcal L'(\M)$ will have the property that for any $x \in \M$, $\underline x^{\mathfrak A}=x$. In this and other $\in$-theories to come, we will make explicit what our extra constants are (with $\mathcal L'$ we just have one extra constant, which we denote $\dot \sigma$).

We need to find an appropriate $\D$-generic sequence $S$ and a suitable admissible structure $\M$ containing $S$ so that $\mathcal L'(\M)$ is consistent. To do this we use transitive liftups and Barwise theory. Transitive liftups will provide approximations to the embedding we are looking for, and we will rely on Barwise Completeness (Fact \ref{fact:completeness}) to obtain the existence of a model with our desired properties. 

Toward this end, let's take what will turn out to be our first transitive liftup, which is in some sense ensuring (\ref{item:MainClaimC=Sk}) of our main claim.

Let $k_0 : N_0 \cong C$ where $N_0$ is transitive, and set $\sigma_0 = k_0^{-1} \circ \sigma$ and $\sigma_0(\overline \theta, \overline{\D}, \overline{\U}, \overline c) = \theta_0, \D_0, \U_0, c_0.$
Since $\delta \subseteq C$ and $N_0$ is transitive, $\sigma_0(\overline \delta)=\delta$. 

Indeed $N_0$ is actually a transitive liftup.\footnote{As Jensen {\cite[Lemma 5.3]{Jensen:2014}} explains, this is exactly why the third requirement of subcompleteness involves this type of Skolem hull - these liftups can be recovered.}

\begin{claimno} \label{claim:N0isliftupofN} $\langle N_0, \sigma_0 \rangle$ is the transitive liftup of $\langle \N, \sigma \upharpoonright H_{\overline \nu}^{\N} \rangle$. \end{claimno}
\begin{proof}[Pf.] Recall that $\nu=\delta^+$, and $\overline \nu={\overline \delta^+}^{\N}$. It must be shown that the embedding $\sigma_0: \overline N \prec N_0$ is $\overline \nu$-cofinal and that $\sigma_0 \upharpoonright H_{\overline \nu}^{\N}=\sigma \upharpoonright H_{\overline \nu}^{\overline N}$. 

To see that $\sigma_0$ is $\overline \nu$-cofinal, let $x \in N_0$. Then $k_0(x) \in C = \Sk{N}{\delta}{X}$ so $k_0(x)$ is uniquely $N$-definable from some $\xi < \delta$ and $\sigma(\overline z)$ where $\overline z \in \N$. In other words, we may say that $k_0(x)$ is exactly that  $y$ satisfying the property that $N \models \varphi(y, \xi, \sigma(\overline z))$ for some $\varphi$. Let $u \in \N$ be defined as 
	$$u=\set{ w \in \N }{\text{$w$ is unique satisfying $\N \models \varphi(w, \zeta, \overline z)$ for some $\zeta < \overline \delta$}}.$$
Certainly $u$ is non-empty by elementarity, as we know $\sigma(u)$ is nonempty, for example $k_0(x) \in \sigma(u)$.
Furthermore, $|u| \leq \overline \delta < \overline \nu$ since every $w \in u$ comes with a (unique) corresponding $\zeta<\overline \delta$.
Thus $x \in k_0^{-1}(\sigma(u))=\sigma_0(u)$ with $|u| < \overline \nu$ in $\N$. So $\sigma_0$ is $\overline \nu$-cofinal as desired. 

For the second part of the claim, recall that $X=\sigma``\N$ is the range of $\sigma$, and make the simple observation that $\sigma``H_{\overline \nu}^{\N} \subseteq X \cup \delta \subseteq C$. Thus $k_0^{-1} \upharpoonright \sigma``H_{\overline \nu}^{\overline N} = \text{id}$.
Therefore $\sigma_0 \rest H_{\overline \nu}^{\N}=\sigma \rest H_{\overline \nu}^{\overline N}$, finishing the proof of Claim \ref{claim:N0isliftupofN}.
\end{proof}

Since $\overline \nu$ is regular in $\N$, so is $\nu_0=\sigma_0(\overline \nu)=\sup \sigma_0``\overline \nu$ in $N_0$, where the last equality follows since $\sigma_0$ is $\overline \nu$-cofinal (see Observation \ref{observation:regularitySups}).
By Interpolation (Fact \ref{fact:Interpolation}), we may say that the transitive collapse embedding $k_0$ may be defined by 
	$k_0: N_0 \prec N \text{ where } k_0 \circ \sigma_0=\sigma \text{ and } k_0 \rest \nu_0 = \id.$
In particular, $\nu_0$ is the critical point of the embedding $k_0$. We illustrate the situation up to this point with Figure \ref{figure:N0N}. 

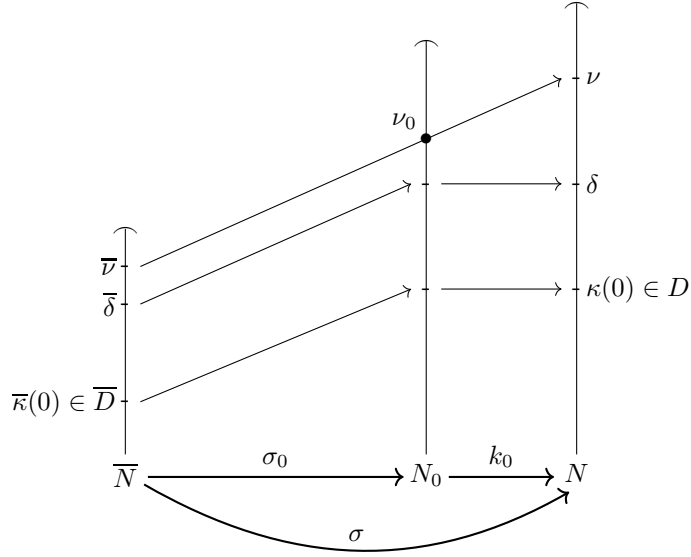
\begin{figure}[h!]
\begin{tikzpicture}
\draw (-3,0)--(-3,3);
\node [below] at (-3,0) {$\N$};
\node at (-3,2.95) {$\frown$};

\node [left] at (-3,0.7) {$\overline \kappa(0) \in \overline D$};
\node at (-3,0.7) {-};
\node [left] at (-3,2) {$\overline \delta$};
\node at (-3,2) {-};
\node [left] at (-3,2.5) {$\overline \nu$};
\node at (-3,2.5) {-};

\draw [->] (-2.8,0.7)--(0.8,2.2);
\draw [->] (-2.8,2)--(0.8,3.6);

\node at (1,2.2) {-};

\node at (1,3.6) {-};
\node [above left] at (1,4.2) {$\nu_0$};
\node at (1,4.2) {$\bullet$};

\draw (1,0) -- (1,5.5);
\node [below] at (1,0) {$N_0$};
\node at (1,5.45) {$\frown$};

\draw [->] (1.2,2.2)--(2.8,2.2);
\draw [->] (1.2,3.6)--(2.8,3.6);
\draw [->] (-2.8,2.5)--(2.8,5);

\node [right] at (3,2.2) {$\kappa(0) \in D$};
\node at (3,2.2) {-};
\node [right] at (3,3.6) {$\delta$};
\node at (3,3.6) {-};
\node [right] at (3,5) {$\nu$};
\node at (3,5) {-};

\draw (3,0)--(3,6);
\node [below] at (3,0) {$N$};
\node at (3,5.95) {$\frown$};

\draw [->, thick] (-2.7,-0.3)--(0.7,-0.3);
\draw [->, thick] (1.3, -0.3)--(2.7,-0.3);
\node [above] at (-1,-0.3) {$\sigma_0$};
\node [above] at (2,-0.3) {$k_0$};
\draw[->,thick] (-2.75,-0.4) to [out = -30, in =-150] node[above]{$\sigma$} (2.9, -0.5);
\end{tikzpicture}
 \caption{Diagram depicting the liftup $\langle N_0, \sigma_0\rangle$.} \label{figure:N0N}
\end{figure}

The embedding $\sigma_0$ has a nice property: $\sk{N}{\delta}{\sigma_0}=\Sk{N}{X}{\delta}= C.$ This is reminiscent of (\ref{item:MainClaimC=Sk}) in the Main Claim. 
Importantly, we still need to find a way to extend the generic sequence $\S$ to a $\D$-generic sequence over $N$. 

We will now define another $\in$-theory, which we will call $\mathcal L_*$. 
This will assist us in obtaining the diagonal Prikry extension $V[S]$ we need to satisfy our main claim. In order to do this, we will take another transitive liftup and apply the Transfer Lemma (Fact \ref{fact:Transfer}), in order to see that this new $\in$-theory is consistent over an admissible structure on $N_0$. 

We give a general definition of the $\in$-theory we will work with. Since we will be referring to basically the same $\in$-theory over two different transitive liftups, the reader should think of $``*"$ in the subscript as a kind of placeholder for some transitive liftup.

\begin{definition}[$\mathcal L$] \label{definition:L} Let $\langle N_*, \sigma_* \rangle$ be a transitive liftup of $\N$ along with some reasonable restriction of $\sigma$, ie. the liftup of $\langle \N, \sigma \rest H_{\alpha}^{\N} \rangle$, where $\alpha \geq \overline \kappa(0)$ is regular in $\N$, and say $\sigma_*(\overline \theta, \overline{\D}, \overline{\U}, \overline c) = \theta_*, \D_*, \U_*, c_*.$ 

Define the infinitary $\in$-theory $\mathcal L(L_{\delta_{N_*}}(N_*))=\mathcal L_*$ as follows.\footnote{Recall that $\delta_{N_*}$ is the least ordinal for which $L_{\delta_{N_*}}(N_*)$ is admissible.}
 
\begin{description}
	\item[predicate] $\in$.
	\item[constants] $\mathring{\sigma}, \mathring S, \underline x$ for $x \in L_{\delta_{N_*}}(N_*)$. 
	\item[axioms] \begin{itemize} \item $\ZFC^-$ and \textsf{Basic Axioms};
		\item $\mathring \sigma : \underline \N \prec \underline{N_*}$ is $\underline{\overline \kappa(0)}$-cofinal;
		\item $\mathring{\sigma}(\overline{\underline{\theta}}, \overline{\underline{\D}}, \overline{\underline{\U}}, \overline{\underline c})=\underline{\theta_*}, \underline{\D_*}, \underline{\U_*}, \underline{c_*}$;
		\item $\mathring S$ is a $\underline{\D_*}$-generic sequence over $\underline{N_*}$;
		\item $\mathring \sigma ``\overline{\underline S} \subseteq \mathring S$.
	\end{itemize}
\end{description} 
\end{definition}

As defined, we have that $\mathcal L_*$ is a $\Sigma_1(L_{\delta_{N_*}}(N_*))$-theory, since altogether the \textsf{Basic Axioms} are $\Sigma_1(L_{\delta_{N_*}}(N_*))$.

\begin{claimno} \label{claim:ConL0} $\mathcal L_0=\mathcal L(L_{\delta_{N_0}}(N_0))$ is consistent. \end{claimno}
\begin{proof}[Pf] Of course, it is not the case that $\sigma_0$ is $\overline \kappa(0)$-cofinal - all we know is that it is $\overline \nu$-cofinal. However, we know how to find an elementary embedding that is $\overline \kappa(0)$ cofinal: by taking a suitable transitive liftup.

Let $\langle N_1, \sigma_1 \rangle$ be the transitive liftup of $\langle \N, \sigma \rest H^{\N}_{\overline \kappa(0)} \rangle$, which exists by Interpolation (Fact \ref{fact:Interpolation}). So we have that $\sigma_1 \rest H^\N_{\overline \kappa(0)} = \sigma \rest H^\N_{\overline \kappa(0)}$. Let $\sigma_1(\overline \theta, \overline{\D}, \overline{\U}, \overline c) = \theta_1, \D_1, \U_1, c_1.$ 
Since $\sigma_1 \rest H^\N_{\overline \kappa(0)} = \sigma \rest H^\N_{\overline \kappa(0)} = \sigma_0 \rest H^\N_{\overline \kappa(0)}$, we also have a unique $k_1$ satisfying $ \text{$k_1: N_1 \prec N_0$ where $k_1\circ \sigma_1 = \sigma_0$ and $k_1 \rest \kappa_1(0) = \text{id}$ where $\kappa_1(0)=k_1(\kappa(0))$.}$ 

Figure \ref{figure:N1N0N} is a picture of all of the relevant transitive liftups.
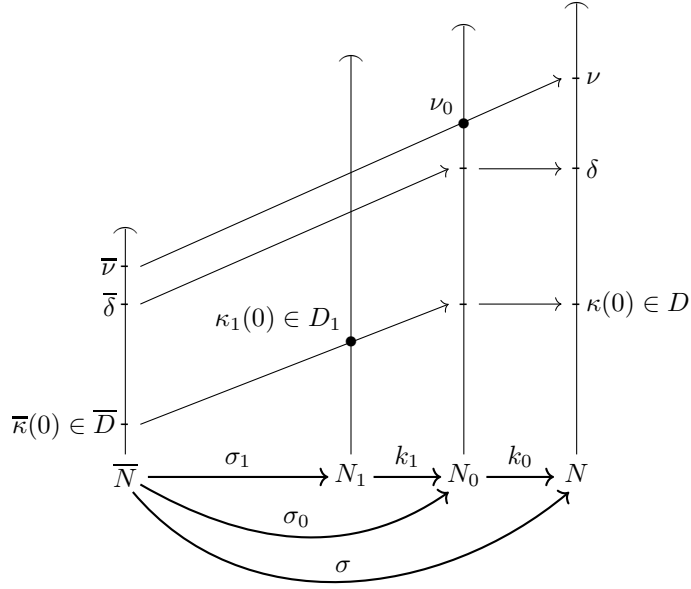
\begin{figure}[h!] 
\begin{tikzpicture}
\draw (-3,0)--(-3,3);
\node [below] at (-3,0) {$\N$};
\node at (-3,2.95) {$\frown$};

\node [left] at (-3,0.4) {$\overline \kappa(0) \in \overline D$};
\node at (-3,0.4) {-};
\node [left] at (-3,2) {$\overline \delta$};
\node at (-3,2) {-};
\node [left] at (-3,2.5) {$\overline \nu$};
\node at (-3,2.5) {-};

\draw [->] (-2.8,0.4)--(1.3,2);

\draw (0,0)--(0,5.3);
\node [below] at (0,0) {$N_1$};
\node at (0,5.25) {$\frown$};
\node at (0,1.5) {$\bullet$};
\node [above left] at (0,1.5) {$\kappa_1(0) \in D_1$};

\draw [->] (-2.8,2)--(1.3,3.8);

\draw (1.5,0) -- (1.5,5.7);
\node [below] at (1.5,0) {$N_0$};
\node at (1.5,5.65) {$\frown$};
\node at (1.5,2) {-};
\node at (1.5,3.8) {-};
\node [above left] at (1.5,4.4) {$\nu_0$};
\node at (1.5,4.4) {$\bullet$};

\draw [->] (1.7,2)--(2.8,2);
\draw [->] (1.7,3.8)--(2.8,3.8);

\draw [->] (-2.8,2.5)--(2.8,5);

\node [right] at (3,2) {$\kappa(0) \in D$};
\node at (3,2) {-};
\node [right] at (3,3.8) {$\delta$};
\node at (3,3.8) {-};
\node [right] at (3,5) {$\nu$};
\node at (3,5) {-};

\draw (3,0)--(3,6);
\node [below] at (3,0) {$N$};
\node at (3,5.95) {$\frown$};

\draw [->, thick] (-2.7,-0.3)--(-0.3,-0.3);
\draw [->, thick] (0.3, -0.3)--(1.2,-0.3);
\draw [->, thick] (1.8, -0.3)--(2.7,-0.3);
\draw[->, thick] (-2.8,-0.4) to  [out=-30, in=-145] node[above]{$\sigma_0$} (1.3,-0.5);
\draw[->,thick] (-2.9,-0.5) to [out = -50, in =-140] node[above]{$\sigma$} (2.9, -0.5);

\node [above] at (-1.5,-0.3) {$\sigma_1$};
\node [above] at (0.75,-0.3) {$k_1$};
\node [above] at (2.25, -0.3) {$k_0$};
\end{tikzpicture}\label{figure:N1N0N} \caption{Diagram depicting the liftups $\langle N_0, \sigma_0\rangle$ and $\langle N_1, \sigma_1\rangle$.}
\end{figure} 

We first show that $\mathcal L_1=\mathcal L(L_{\delta_{N_1}}(N_1))$ is consistent, by seeing that it has a model. To do this, we will find a sequence extending $\sigma_1``\S$ that is $\D_1$-generic over $N_1$. Then we will use the Transfer Lemma to see that this transfers to the consistency of $\mathcal L_0$. 
First, force over $V$ with $\D_1 = \D_1(\mathcal U_1)$, which is a generalized diagonal Prikry forcing over $\mathcal U_1$, to obtain a diagonal Prikry sequence $S_1'$. 
Define, in $V[S_1']$, a new sequence $S_1$ as follows:

$$S_1(\kappa) = \begin{cases} S_1'(\kappa) &\text{ if } \kappa \in D_1 \setminus \sigma_1``\overline D; \\
					\sigma_1(\overline S(\overline \kappa)) &\text{ if } \kappa = \sigma_1(\overline \kappa) \in \sigma_1``\overline D. \end{cases}$$

\begin{claimno} \label{Claim:S_1isPrikryoverN_1} The sequence $S_1$ is a $\D_1$-generic sequence over $N_1$. \end{claimno}
\begin{proof}[Pf.]
We will show that $S_1$ satisfies the generalized diagonal Prikry genericity criterion (Fact \ref{fact:diagprikrymathias}) over $N_1$. To do this, let $\mathcal X = \seq{ X_\kappa \in U_1(\kappa) }{ \kappa \in D_1 } \in N_1$, be a sequence of measure-one sets in the sequence of measures $\U_1$.

Note first that as $S_1'$ is a generic sequence, it already satisfies the generalized diagonal Prikry genericity criterion, namely:
$\set{ \kappa \in D_1}{ S_1'(\kappa) \notin X_\kappa } \text{ is finite.}$
Recall that $\S = \seq{ \S(\overline \kappa) }{ \overline \kappa \in \overline D }$ is a $\overline{\D}$-generic sequence as well.
We need to see that in addition,
$\set{ \overline \kappa \in \overline D}{\sigma_1(\S(\overline \kappa)) \notin X_{\sigma_1(\overline \kappa)} }  \text{ is finite,}$
since then 
\begin{align*}
\set{ \kappa \in D_1}{ S_1(\kappa) \notin X_\kappa } = & \set{ \kappa \in D_1 \setminus \sigma_1``\overline D}{ S_1'(\kappa) \notin X_\kappa } \cup\\
& \set{ \kappa = \sigma_1(\overline \kappa) \in \sigma_1``\overline D}{\sigma_1(\S(\overline \kappa)) \notin X_\kappa }
\end{align*}
is finite as well, completing the proof as desired.

By the $\overline{\kappa}(0)$-cofinality of $\sigma_1$, there is some $w \in \N$ such that $\mathcal X \in \sigma_1(w)$, where $|w| < \overline{\kappa}(0)$ in $\N$. Thus in $N_1$, we have that $|\sigma_1(w)| < \kappa_1(0)$. 
We may assume that $w$ consists of functions $f \in \prod_{\overline \kappa \in \overline D} \overline U(\overline \kappa)$.
So for each $\kappa \in \sigma_1``\overline D$, we have that $X_\kappa \in \sigma_1(w)_\kappa = \set{\sigma_1(f)(\kappa) }{ f \in \prod_{i \in \overline D} \overline U(i) \ \land \ f \in w }$ and also $|\sigma_1(w)_\kappa|<\kappa_1(0).$ So all $\kappa \in \sigma_1``\overline D$ of course satisfy $\kappa \geq \kappa_1(0)$ and thus by the $\kappa$-completeness of $U_1(\kappa)$, we have that $W_\kappa := \bigcap \sigma_1(w)_\kappa \in  U_1(\kappa).$
So we have established that $\mathcal W$, the sequence of $W_\kappa$ for $\kappa \geq \kappa_1(0)$, is also a sequence of measure-one sets in $N_1$. Note in addition that for $\kappa \in \sigma_1``\overline D$, we have that $W_\kappa \subseteq X_\kappa$. 

For each $\overline \kappa \in \overline D$, we have that $\overline W_{\overline \kappa} = \bigcap \set{f(\overline \kappa) }{ f \in \prod_{i \in \overline D} \overline U(i) \ \land \ f \in w }$ is a measure-one set in $\overline U(\overline \kappa)$ and we also have that $\sigma_1(\overline W_{\overline \kappa}) = W_{\sigma_1(\overline \kappa)}$, all by elementarity. Moreover, 
$\set{\overline \kappa \in \overline D}{\overline S(\overline \kappa) \notin \overline W_{\overline \kappa}} \text{ is finite}$ by the generalized diagonal Prikry genericity criterion applied to $\overline S$.
Thus by elementarity,
$$\set{ \overline \kappa \in \overline D }{ \sigma_1(\S(\overline \kappa)) \notin W_{\sigma_1(\overline \kappa)} } \supseteq \set{ \overline \kappa \in \overline D }{ \sigma_1(\S(\overline \kappa)) \notin X_{\sigma_1(\overline \kappa)}} \text{ is finite,}$$
as is desired, completing the proof of Claim \ref{Claim:S_1isPrikryoverN_1}.
\end{proof}

We have shown that $\mathcal L_1$ is consistent, by Barwise Correctness (Fact \ref{fact:correctness}), since we have just shown that $\langle H_{\delta}; \, \sigma_1, S_1 \rangle$ is a model of $\mathcal L_1$.

Let's check that we may now apply the Transfer Lemma (Fact \ref{fact:Transfer}) to the embedding $k_1:N_1 \prec N_0$. By Lemma \ref{lemma:liftupfull} we have that $N_1$ is almost full. We also have that $\mathcal L_1 = \mathcal L(L_{\delta_{N_1}}(N_1))$ is 
	$\text{$\Sigma_1(\langle N_1; \, \overline \theta, \overline{\D}, \overline{\U}, \overline c, \theta_1, \D_1, \U_1, c_1 \rangle)$}$ while $\mathcal L_0=\mathcal L(L_{\delta_{N_0}}(N_0))$ is 
	$\text{$\Sigma_1(\langle N_0;\,k_1(\overline \theta, \overline{\D}, \overline{\U}, \overline c, \theta_1, \D_1, \U_1, c_1)\rangle)$.}$
Furthermore $k_1$ is cofinal, since for each element $x \in N_0$, as $\sigma_0$ is cofinal, there is $u \in \N$ such that $x \in \sigma_0(u)$. Thus $\sigma_1(u) \in N_1$, and moreover $x \in k_1(\sigma_1(u))=\sigma_0(u)$. 

Therefore, we have that $\mathcal L_0$ is consistent, since $\mathcal L_1$ is consistent. This completes the proof of Claim \ref{claim:ConL0}. \end{proof}

From the consistency of $\mathcal L_0$ (recall that $\mathcal L_0$ is defined in Definition \ref{definition:L}), we would now like to use Barwise Completeness (Fact \ref{fact:completeness}) to obtain a model of $\mathcal L_0$. To do this, we need the admissible structure the theory is defined over to be countable. So let's work in $V[F]$, a generic extension that collapses $L_{\delta_{N_0}}(N_0)$ to be countable. Then by Barwise Completeness, $\mathcal L_0$ has a solid model 
	$\mathfrak A = \langle \mathfrak A; \mathring{S}^{\mathfrak A}, \mathring{\sigma}^{\mathfrak A} \rangle$ such that $\Ord \cap \wfc(\mathfrak A) = \Ord \cap L_{\delta_{N_0}}(N_0).$
We also have that $\mathring \sigma^{\mathfrak A}: \overline{\underline N}^{\mathfrak A} \prec \underline{N_0}^{\mathfrak A}$. By Fact \ref{fact:PointofBasicAxioms} we have that $\overline{\underline N}^{\mathfrak A}=\N$ and $N_0=\underline{N_0}^{\mathfrak A}$. Thus we may say that  $\mathring \sigma^{\mathfrak A}: \N \prec N_0$. 

Let $S = \mathring{S}^{\mathfrak A}$ and $\overset{*} {\sigma}=k_0 \circ \mathring{\sigma}^{\mathfrak A}$. 

Then $S$ is a $\D_0$-generic sequence over $N_0$, and as $k_0:N_0 \cong C$ we also have that $k_0``S$ is $\D$-generic over $C$. 

\begin{claimno} \label{claim:SisDgenericOverV} $S$ is $\D$-generic over $V$. \end{claimno}
\begin{proof}[Pf.]
We will again use the generalized diagonal Prikry genericity criterion, so let $\mathcal X = \seq{ X_\kappa \in U_1(\kappa) }{ \kappa \in D }$ be a sequence of measure-one sets in the sequence of measures $\U$. Fix a dense set $E \subseteq \D$ of size $\delta$ with $E \in C$. Since $\delta \subseteq C$, we have that $E \subseteq C$ as well. Find a condition $(s,A) \in E$ that strengthens $(\emptyset, \mathcal X)$. Thus for $\kappa \in \dom A$, we have that $A(\kappa) \subseteq X_\kappa$. Define a sequence of measure-one sets $B$ in $C$ so that 
$$B(\kappa) = \begin{cases} A(\kappa) &\text{if} \ \kappa \in \dom A; \\ \kappa &\text{if} \ \kappa \in \dom s. \end{cases}$$
So we have that $B$ is a sequence of measure-one sets in $C$. So $\set{ \kappa \in D }{S(\kappa) \notin B(\kappa) }$ is finite. Thus $\set{\kappa \in D}{ S(\kappa) \notin X_\kappa}$ is finite. So $S$ is $\D$-generic over $V$, completing the proof of Claim \ref{claim:SisDgenericOverV}.
\end{proof}

This will be the $\D$-generic sequence we need to satisfy the Main Claim. We will see in the following Claim \ref{claim:starsigmaisgood} that $\overset{*} \sigma$ has all of the desired properties, but it fails to be in $V[S]$. However, $\overset{*}{\sigma}$ will at least enable us to see that our $\in$-theory $\mathcal L'$ from Definition \ref{definition:T}, defined to assist us in proving the Main Claim, is consistent over a suitable admissible structure.
\begin{claimno} \label{claim:starsigmaisgood} The map $\overset{*}{\sigma}$ satisfies:
\begin{enumerate}
	\item \label{item:elemembed} $\overset{*} {\sigma}: \N \prec N$;
	\item \label{item:rangematches} $\overset{*} {\sigma}(\overline \theta, \overline{\D}, \overline{\U}, \overline c)=\theta, \D, \U, c$;
	\item \label{item:skolemC} $\sk{N}{\delta}{\overset{*}{\sigma}} = C$;	
	\item \label{item:lifts} $\overset{*} {\sigma} `` \S \subseteq S$.
\end{enumerate}
\end{claimno}
\begin{proof}[Pf.]
For (\ref{item:elemembed}), we have already seen above that $\mathring \sigma^{\mathfrak A}: \N \prec N_0$. Since $k_0:N_0 \prec N$, the desired result follows.

For (\ref{item:rangematches}), $\overset{*} {\sigma}(\overline \theta, \overline{\D}, \overline{\U}, \overline c)= k_0(\theta_0, \D_0, \U_0, c_0) =\theta, \D, \U, c$.

We know (\ref{item:skolemC}) holds since $N_0 = \sk{N_0}{\delta}{\mathring \sigma^{\mathfrak A}}$. To see this, clearly we have that $\sk{N_0}{\delta}{\mathring \sigma^{\mathfrak A}} \subseteq N_0$, since $\delta \in N_0$ as $N_0 \cong C$, and certainly $\ran(\mathring \sigma^{\mathfrak A})) \subseteq N_0$ as well. 
Then because $\mathring \sigma^{\mathfrak A}$ is $\overline \kappa(0)$-cofinal, by Lemma \ref{lemma:liftupchar}, we have, since $\mathring \sigma^{\mathfrak A}(\overline \kappa') < \delta$, that:
$$N_0 = \set{\mathring \sigma^{\mathfrak A}(f)(\beta)}{ f: \gamma \longrightarrow N_0, \ \gamma < \overline \kappa(0) \text{ and } \beta < \mathring \sigma^{\mathfrak A}(\gamma) } \subseteq \sk{N}{\delta}{\mathring \sigma^{\mathfrak A}}.$$
Thus $C = k_0``N_0=\sk{N_0}{\delta}{k_0 \circ \mathring \sigma^{\mathfrak A}}$ as desired. 

To see (\ref{item:lifts}), note that $\overset{*} {\sigma} \rest \overline \kappa(0) = \mathring{\sigma}^{\mathfrak A} \rest \overline \kappa(0)$ since $k_0 \rest \nu_0 = \id$.

This completes the proof of Claim \ref{claim:starsigmaisgood}.
\end{proof}

We are almost done, but recall that $\mathring{\sigma}^{\mathfrak A}$ is in $V[F]$, the collapse extension, and not in $V[S]$, and so $\overset{*} {\sigma}$ is also not necessarily in $V[S]$. We will use Barwise Completeness one last time, to finally find an embedding $\sigma'$ to satisfy the Main Claim along with the $S$ we found above.

Let $\lambda$ be regular in $V[S]$ with $N \in H_\lambda^{V[S]}$. Then 
	$$\M = \langle H^{V[S]}_\lambda;\ N, \sigma, S;\ \theta, \delta, \D, \U, c \rangle \text{ is admissible.}$$

In order to satisfy our Main Claim, we need a model of $\mathcal L'(\M)$ in $V[S]$. By Claim \ref{claim:starsigmaisgood}, we have that $\langle \M , \overset{*} {\sigma}\rangle$ is a model of $\mathcal L'(\M)$ in $V[F]$. Thus by Barwise Correctness (Fact \ref{fact:correctness}), $\mathcal L'(\M)$ is consistent.
Let 
	$\pi: \widetilde \M \prec \M$ where $\widetilde \M$ is countable and transitive.
Note that $\widetilde \M \in H_{\omega_1}^{V[S]}=H_{\omega_1}^V$ since diagonal Prikry forcing does not add bounded subsets to  $\kappa(0) > \omega_1$.\footnote{As Fuchs \cite[p.~966]{Fuchs:2005kx} points out, this result is a modification to the proof that generalized diagonal Prikry forcing preserves cardinalities.} 
We also have that $\overline{\underline N}^{\widetilde{\mathfrak A}} = \N$, since $\M$ sees that $\N$ is countable so $\widetilde{\M}$ sees that $\pi^{-1}(\N)$ is, and it follows that $\pi^{-1}(\N)=\N$.
Moreover, $\mathcal L'(\widetilde \M)$ is consistent in $V[F]$ and thus $V$, since any inconsistency could be pushed up via $\pi$ to one in $\mathcal L'(\M)$, contradicting the consistency of this latter theory that we showed in $V[F]$.

So by Barwise Completeness (Fact \ref{fact:completeness}) we have $\mathcal L'(\widetilde \M)$ has a solid model $\widetilde{\mathfrak A} = \langle \widetilde{\mathfrak A}; \dot{\sigma}^{\widetilde{\mathfrak A}} \rangle$ such that $\Ord \cap \text{wfc}(\widetilde{\mathfrak A}) = \Ord \cap \widetilde \M.$

Letting $\widetilde \sigma' = \dot{\sigma}^{\widetilde{\mathfrak A}}$ and $\sigma'=\pi \circ \widetilde \sigma'$, the Main Claim is now satisfied with $\sigma'$ and our $\D$-generic sequence $S$.

Indeed let us verify each of the properties of $\sigma'$ required by the Main Claim. The verification of these properties shall use the agreement between $\widetilde{\mathfrak A}$ and $\widetilde \M$ on the special constants of $\widetilde \M$ and on the ordinals. The fact that $\pi$ fixes $\N$ will be greatly taken advantage of.

First we show (\ref{item:mainclaimsigma'elem}) of the Main Claim. Let's say that $\varphi[\sigma'(\overline a)]$ holds in $N$. So $\varphi[\pi(\widetilde \sigma'(\overline a))]^N$ holds in $\M$. Thus $\varphi[\widetilde \sigma'(\overline a)]^{\pi^{-1}(N)}$ holds in $\widetilde \M$, and thus also in $\widetilde{\mathfrak A}$. Indeed we know that $\underline{\N}^{\widetilde{\mathfrak A}} = \N$.
This means that $\varphi[\overline a]$ holds in $\N$, as desired.

To see (\ref{item:mainclaim2}), we have $\sigma'(\overline \theta, \overline{\D}, \overline{\U}, \overline c)= \pi(\underline{\theta}^{\widetilde{\mathfrak A}}, \underline{\D}^{\widetilde{\mathfrak A}}, \underline{\U}^{\widetilde{\mathfrak A}}, \underline{c}^{\widetilde{\mathfrak A}})=\theta, \D, \U, c$.

For item (\ref{item:MainClaimC=Sk}), let $\widetilde N = \underline N^{\widetilde{\mathfrak A}}$, $\widetilde \sigma = \underline \sigma^{\widetilde{\mathfrak A}}$ and $\widetilde \delta = \underline{\delta}^{\widetilde{\mathfrak A}}$. So $\pi(\widetilde \delta)= \delta$, $\pi(\widetilde N)=N$, and $\pi(\widetilde \sigma)=\sigma$. Because of the way the $\in$-theory $\mathcal L'$ was defined, we already have: 
\begin{equation} \tag{$\star$} \label{eqn:SkDotSigma=SkTildeSigma} \sk{\widetilde N}{\widetilde \delta}{\widetilde \sigma'} = \sk{\widetilde N}{\widetilde \delta}{\widetilde \sigma}. \end{equation} To see the inclusion from left to right, note that by (\ref{eqn:SkDotSigma=SkTildeSigma}) $\ran{\widetilde \sigma'} \subseteq \sk{\widetilde N}{\widetilde \delta}{\widetilde \sigma}$. Thus 
	$\ran(\pi \circ \widetilde \sigma') \subseteq \sk{\pi(\widetilde N)}{\pi(\widetilde \delta)}{\pi(\widetilde\sigma)}.$
Making the same observation for the other direction, it follows that
$\sk{N}{\delta}{\sigma'} = \sk{N}{\delta}{\sigma}$ as desired.

To see item (\ref{item:mainclaimlifts}), note that $\overline{\underline S}^{\widetilde{\mathfrak A}} = \overline S$ since $\overline S \subseteq \overline N$. So we have already by the definition of $\mathcal L'$  that $\dot{\sigma}^{\widetilde{\mathfrak A}}``\overline S \subseteq \underline S^{\widetilde{\mathfrak A}}$. Thus $\pi \circ \dot{\sigma}^{\widetilde{\mathfrak A}}``\overline S \subseteq \pi `` \underline S^{\widetilde{\mathfrak A}} \subseteq S$ as desired. 
This completes the proof of the Main Claim.
\end{proof}
We have satisfied the Main Claim, which finishes the proof that $\D$ is subcomplete.
\end{proof}

We now describe some minor modifications of the proof that give further results.

The first point is that the above proof also shows that generalized Prikry forcing that adds a countable sequence to each measurable cardinal is subcomplete. Before stating the corollary let's define the forcing. Again let $D$ be an infinite discrete set of measurable cardinals. Let $\U = \seq{ U(\kappa) }{ \kappa \in D }$ be a list of measures associated to $D$. 
Let $\D^*(\U) = \D^*$ be defined in the same way as $\D(\U)$ except the stem of a condition, $s$, in $\D^*(\U)$ is a function with domain in $[D]^{<\omega}$ taking each measurable cardinal $\kappa \in \dom(s)$ to a finite set of ordinals $s(\kappa) \subseteq \kappa$. 
The upper parts are defined for each $\kappa\in D$, and the extension relation is defined in the same way, now a condition is strengthened by end-extending the stems on each coordinate, as well as the whole sequence of stems, and shrinking the upper parts.

We may again form a $\D^*$-generic sequence $S = S_G$ for a generic $G \subseteq \D^*$, and we may write $S = \seq{ S(\kappa) }{ \kappa \in D }$ where $S(\kappa)$ is a countable sequence of ordinals less than $\kappa$. The genericity criterion for generic diagonal Prikry sequences is as that for $\D$, stated in Fact \ref{fact:diagprikrymathias} (or see~\cite[Thm.~1]{Fuchs:2005kx}), with the modification that $S$ is $\D^*$ generic if and only if for all $\mathcal X$, the set $\set{ \alpha }{ \exists \kappa \in D \ \alpha \in S(\kappa) \setminus X_\kappa }$ is finite.

\begin{corollary} 
Let $D$ be an infinite discrete set of measurable cardinals. Let $\U = \seq{ U(\kappa) }{ \kappa \in D }$ be a list of measures associated to $D$. Then $\D^*(\U)$ is subcomplete.
\end{corollary}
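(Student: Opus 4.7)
The plan is to adapt the proof of Theorem \ref{theorem:main} essentially verbatim, tracking the few places where the switch from single-ordinal stems to finite-set stems (and hence countable sequences $S(\kappa)$ after forcing) affects the argument. The overall architecture is unchanged: construct the transitive liftups $\langle N_0, \sigma_0 \rangle$ of $\langle \N, \sigma \rest H^{\N}_{\overline\nu} \rangle$ and $\langle N_1, \sigma_1 \rangle$ of $\langle \N, \sigma \rest H^{\N}_{\overline\kappa(0)} \rangle$; formulate the analog of the $\in$-theory $\mathcal L_*$ from Definition \ref{definition:L}, now requiring $\mathring S$ to be a $\D^*_*$-generic sequence; show consistency of the corresponding $\mathcal L^*_1$ by exhibiting a model from a $\D^*_1$-generic over $V$; transfer via $k_1$ to get $\mathcal L^*_0$ consistent; collapse and apply Barwise Completeness to recover a genuine $\D^*$-generic $S$ over $V$; and finally run the Barwise step for the analog of $\mathcal L'$ to produce $\sigma' \in V[S]$ witnessing the Main Claim.

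The only substantive modification appears in the analog of Claim \ref{Claim:S_1isPrikryoverN_1}. Force over $V$ with $\D^*_1$ to obtain $S'_1$ and define
\[ S_1(\kappa) = \begin{cases} S'_1(\kappa) & \text{if } \kappa \in D_1 \setminus \sigma_1``\overline D, \\ \set{\sigma_1(\alpha)}{\alpha \in \overline S(\overline\kappa)} & \text{if } \kappa = \sigma_1(\overline\kappa) \in \sigma_1``\overline D, \end{cases} \]
so that the pointwise $\sigma_1$-image of the countable sequence $\overline S(\overline\kappa)$ serves as the $\sigma_1(\overline\kappa)$-coordinate of $S_1$. To verify the modified Mathias-style criterion stated just before the corollary, fix $\mathcal X = \seq{X_\kappa}{\kappa \in D_1} \in N_1$: by $\overline\kappa(0)$-cofinality pick $w \in \N$ with $\mathcal X \in \sigma_1(w)$ and $|w| < \overline\kappa(0)$ in $\N$; use $\kappa$-completeness of $U_1(\kappa)$ to shrink to $W_\kappa = \bigcap \sigma_1(w)_\kappa \in U_1(\kappa)$ with $W_\kappa \subseteq X_\kappa$ for $\kappa \in \sigma_1``\overline D$; and pull back by elementarity to $\overline W_{\overline\kappa} \in \overline U(\overline\kappa)$ with $\sigma_1(\overline W_{\overline\kappa}) = W_{\sigma_1(\overline\kappa)}$. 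By $\overline\D^*$-genericity of $\overline S$, the set
\[ F = \set{\alpha}{\exists \overline\kappa \in \overline D \ \alpha \in \overline S(\overline\kappa) \setminus \overline W_{\overline\kappa}} \]
is finite; by injectivity of $\sigma_1$ the corresponding misfit set for $S_1$ along $\sigma_1``\overline D$ is contained in $\sigma_1``F$ and hence finite, and on $D_1 \setminus \sigma_1``\overline D$ the $\D^*_1$-genericity of $S'_1$ already does the work.

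The remainder of the proof — Transfer of consistency from $\mathcal L^*_1$ to $\mathcal L^*_0$ along $k_1$, extraction of $S$ from the Barwise-produced solid model $\mathfrak A$ inside a collapse extension $V[F]$, verification that the resulting $S$ is $\D^*$-generic over $V$ (again via the modified criterion together with a dense subset of $\D^*$ of size $\delta$ sitting inside $C = \sk{N}{\delta}{\sigma}$), confirmation that $\overset{*}{\sigma} = k_0 \circ \mathring{\sigma}^{\mathfrak A}$ enjoys the analogs of the properties in Claim \ref{claim:starsigmaisgood}, and the final Barwise descent from $V[F]$ down to $V[S]$ via an admissible structure built over $\langle N, \sigma, S\rangle$ — proceeds with only cosmetic notational adjustment. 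I expect no serious new obstacle; the one point to be careful about, and essentially the only place where the countable-sequence nature matters, is the coordinate-wise argument above, where one must remember that $\sigma_1$ is being applied to the elements of $\overline S(\overline\kappa)$ rather than to the sequence itself (which need not even lie in $\N$).
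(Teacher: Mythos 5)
Your proposal is correct and follows essentially the same route as the paper's own proof sketch: keep the entire liftup/Barwise architecture of Theorem \ref{theorem:main} and only rework Claim \ref{Claim:S_1isPrikryoverN_1}, filling in the coordinates outside $\sigma_1``\overline D$ with a $\D^*_1$-generic from $V$ and verifying the modified Mathias-style criterion via $\kappa$-completeness and elementarity. Your explicit care that $\sigma_1$ is applied pointwise to the elements of $\overline S(\overline\kappa)$ (since the $\omega$-sequence itself is not in $\N$) is exactly the right reading of the paper's abbreviated notation $\sigma_1``\overline S$.
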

\begin{proof}[Proof Sketch.]
The modifications are mostly notational, and the main one that needs to be made is to adjust the proof of Claim \ref{Claim:S_1isPrikryoverN_1}. Here we have $\D^*_1$, the generalized diagonal Prikry forcing as computed in $N_1$, as well as $\overline{\D}^*$ of $\N$, 
and $S_1$, which we would like to show is a $\D_1^*$-generic sequence over $N_1$ in this case. $S_1$ is defined as $\sigma_1 ``\overline S$, using a diagonal Prikry sequence $S_1'$ to fill in the missing coordinates, where $S_1'$ is obtained by forcing with $\D_1$ over $V$.

We will show that $S_1$ satisfies the generalized diagonal Prikry genericity criterion over $N_1$ and follow the proof of Theorem \ref{theorem:main}. To do this, let $\mathcal X = \seq{ X_\kappa \in U_1(\kappa) }{ \kappa \in D_1 }$, with $\mathcal X \in N_1$, be a sequence of measure-one sets in the sequence of measures $\U_1$.

Note first that $S_1'$ is a generic sequence, so it already satisfies the generalized diagonal Prikry genericity criterion, namely:
$\set{ \alpha }{ \exists \kappa \in D_1 \ \alpha \in S_1'(\kappa) \setminus X_\kappa }$ is finite.
Recall that $\S = \seq{ \S(\overline \kappa) }{ \overline \kappa \in \overline D }$ is a $\overline{\D}$-generic sequence over $\N$ as well.
We need to see that in addition, $\set{ \alpha }{ \exists \overline \kappa \in \overline D \ \alpha \in \sigma_1(\S(\overline \kappa)) \setminus X_{\sigma_1(\overline \kappa)} }$ is finite.

By the $\overline{\kappa}(0)$-cofinality of $\sigma_1$, there is some $w \in \N$ such that $\mathcal X \in \sigma_1(w)$, where $|w| < \overline{\kappa}(0)$ in $\N$. Thus in $N_1$, $|\sigma_1(w)| < \kappa_1(0)$. 
For each $\kappa \in \sigma_1``\overline D$, we have that $X_\kappa \in \sigma_1(w)_\kappa = \set{\sigma_1(f)( \kappa) }{ f \in \prod_{i \in \overline D} \overline U(i) \ \land \ f \in w }$ and also $|\sigma_1(w)_\kappa|<\kappa_1(0).$ All $\kappa \in \sigma_1``\overline D$ of course satisfy $\kappa \geq \kappa_1(0)$ so by the $\kappa$-completeness of $U_1(\kappa)$, we have that $W_\kappa := \bigcap \sigma_1(w)_\kappa \in U_1(\kappa)$ since $X_\kappa \in \sigma_1(w)_\kappa$.
So we have established that $\mathcal W$, the sequence of $W_\kappa$ for $\kappa \geq \kappa_1(0)$, is also a sequence of measure-one sets in $N_1$. Note in addition that for $\kappa \in \sigma_1``\overline D$, we have that $W_\kappa \subseteq X_\kappa$. 

For each $\overline \kappa \in \overline D$, we have that $\overline W_{\overline \kappa} = \bigcap \set{f(\overline \kappa) }{ f \in \prod_{i\in \overline D} \overline U(i) \ \land \ f \in w }$ is a measure-one set in $\overline U(\overline \kappa)$ and also that $\sigma_1(\overline W_{\overline \kappa}) = W_{\sigma_1(\overline \kappa)}$, all by elementarity. Moreover, 
$\set{ \alpha }{ \exists \overline \kappa \in \D \ \alpha \in \overline S(\overline \kappa) \setminus \overline W_{\overline \kappa} } \text{ is finite}$ by the generalized diagonal Prikry genericity criterion for $\overline{\D}$, which is satisfied by $\overline S$.
Thus by elementarity,
$$\set{ \alpha }{ \exists \overline \kappa \in \D \ \sigma_1(\S(\overline \kappa)) \setminus W_{\sigma_1(\overline \kappa)} } \supseteq \set{ \alpha }{ \exists \overline \kappa \in \D \ \sigma_1(\S(\overline \kappa)) \setminus X_{\sigma_1(\overline \kappa)}} \text{ is finite,}$$
as is desired, completing the proof of our modification of Claim \ref{Claim:S_1isPrikryoverN_1}. 
\end{proof}

One might consider a mixed version of $\D$ and $\D^*$, a poset that adds a single point below some measurable cardinals, and a cofinal $\omega$-sequence below others. This forcing is clearly subcomplete as well.

Finally we show that generalized diagonal Prikry forcing is subcomplete above $\mu$ where $\mu$ is a regular cardinal less than the first limit point of the forcing's associated sequence of measurables.

\begin{corollary}\label{corollary:SCaboveMu} Let $D$ be an infinite discrete set of measurable cardinals. Let $\U = \seq{ U(\kappa) }{ \kappa \in D }$ be a list of measures associated to $D$. 

Furthermore, let $\mu < \lambda$ be a regular cardinal, where $\lambda = \sup_{n<\omega} \kappa_n$, the first limit point of $D$.
Then $\D=\D(\U)$ is subcomplete above $\mu$. \end{corollary}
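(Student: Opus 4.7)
The plan is to adapt the proof of Theorem~\ref{theorem:main} by replacing the second transitive liftup with a ``taller'' one whose cofinality lies above $\overline{\mu}$, and by augmenting the $\in$-theories $\mathcal{L}_0$ and $\mathcal{L}'$ with an axiom forcing pointwise agreement with $\sigma$ on $H_{\overline{\mu}}^{\N}$. Since $\mu < \lambda = \sup_{n<\omega}\kappa(n)$, let $m < \omega$ be least with $\kappa(m) \geq \mu$; by elementarity $\overline{\mu} \leq \overline{\kappa}(m)$ in $\N$. In the standard setup for subcompleteness above $\mu$ we have $\sigma(\overline{\mu}) = \mu$, and I would take the second transitive liftup $\langle N_1, \sigma_1\rangle$ to be the liftup of $\langle \N, \sigma \rest H_{\overline{\kappa}(m)}^{\N}\rangle$. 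By Interpolation (Fact~\ref{fact:Interpolation}) this is $\overline{\kappa}(m)$-cofinal with $\sigma_1 \rest H_{\overline{\kappa}(m)}^{\N} = \sigma \rest H_{\overline{\kappa}(m)}^{\N}$, so in particular $\sigma_1 \rest H_{\overline{\mu}}^{\N} = \sigma \rest H_{\overline{\mu}}^{\N}$, and a factor map $k_1 : N_1 \prec N_0$ still exists. The first liftup $\langle N_0, \sigma_0\rangle$, built from $H_{\overline{\nu}}^{\N}$, is left unchanged.

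Next, I would modify the definition of $\mathcal{L}$ so that $\mathring{\sigma}$ is required to be $\underline{\overline{\kappa}(m)}$-cofinal instead of $\underline{\overline{\kappa}(0)}$-cofinal, and include the extra axiom asserting $\mathring{\sigma}(\underline{a}) = \underline{\sigma_0(a)}$ conjoined over the $\M$-finite set $H_{\overline{\mu}}^{\N}$ (after ensuring that $\N$ and the restriction $\sigma_0 \rest H_{\overline{\mu}}^{\N}$ are available as parameters in the admissible structure). The modified theory is still $\Sigma_1$, and the model $\langle H_\delta;\,\sigma_1, S_1\rangle$ used to witness consistency of $\mathcal{L}_1$ in the main proof automatically satisfies the new axiom, by our choice of the taller liftup.

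The main obstacle is verifying the analogue of Claim~\ref{Claim:S_1isPrikryoverN_1}: given a sequence $\mathcal{X} \in N_1$ of measure-one sets, $\overline{\kappa}(m)$-cofinality produces $w \in \N$ with $|w| < \overline{\kappa}(m)$ in $\N$, and so $|\sigma_1(w)| < \sigma_1(\overline{\kappa}(m)) = \kappa_1(m)$ in $N_1$. Consequently, $\kappa$-completeness of $U_1(\kappa)$ only yields $W_\kappa := \bigcap \sigma_1(w)_\kappa \in U_1(\kappa)$ for $\kappa \geq \kappa_1(m)$. The ``low'' coordinates $\sigma_1``\{\overline{\kappa}(0), \ldots, \overline{\kappa}(m-1)\}$ in $\sigma_1``\overline D$ form a finite set of size $m$, however, so they can contribute at most $m$ failures to the Mathias-style genericity criterion, and the high-coordinate argument runs verbatim. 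Once the adapted claim is in hand, the Transfer Lemma transfers consistency from $\mathcal{L}_1$ to $\mathcal{L}_0$ as in the original proof, Barwise Completeness in $V[S]$ produces the required $\sigma' \in V[S]$, and the analogous extra axiom added to $\mathcal{L}'$ (with $\sigma \rest H_{\overline{\mu}}^{\N}$ as a parameter) guarantees $\sigma' \rest H_{\overline{\mu}}^{\N} = \sigma \rest H_{\overline{\mu}}^{\N}$, yielding subcompleteness above $\mu$.
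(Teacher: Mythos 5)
Your proposal is correct and follows essentially the same route as the paper's proof sketch: replace $\overline\kappa(0)$ by the least element of $\overline D$ at or above $\overline\mu$ (so only finitely many measurables lie below it), take the second liftup there, absorb the finitely many low coordinates as exceptions in the Mathias-style genericity criterion, and add agreement-below-$\overline\mu$ axioms to $\mathcal L$ and $\mathcal L'$. The only small step you leave implicit, which the paper notes, is that the witness $\overset{*}{\sigma}=k_0\circ\mathring\sigma^{\mathfrak A}$ for the consistency of $\mathcal L'(\M)$ satisfies the new agreement axiom because $k_0\rest\nu_0=\id$.
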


\begin{proof}[Proof Sketch] The idea is to follow the same exact proof as in Theorem \ref{theorem:main}, except we achieve Figure \ref{figure:SCaboveMu}.

\begin{figure}[h!] 
\begin{tikzpicture}
\draw (-3,-1)--(-3,3);
\node [below] at (-3,-1) {$\N$};
\node at (-3,2.95) {$\frown$};

\node [left] at (-3,-0.4) {$\overline \mu$};
\node at (-3,-0.4) {-};
\node [left] at (-3,0) {$\overline D \ni\overline \kappa' $};
\node at (-3,0) {-};
\node [left] at (-3,0.6) {$\overline \lambda$};
\node at (-3,0.6) {-};
\node [left] at (-3,2) {$\overline \delta$};
\node at (-3,2) {-};
\node [left] at (-3,2.5) {$\overline \nu$};
\node at (-3,2.5) {-};

\draw [->] (-2.8,0)--(1.3,1.4);

\draw (0,-1)--(0,5.3);
\node [below] at (0,-1) {$N_1$};
\node at (0,5.25) {$\frown$};

\node at (0,0.95) {$\bullet$};
\node [above left] at (0,0.95) {$\kappa_1' \in D_1$};

\draw [->] (-2.8,2)--(1.3,3.8);

\draw (1.5,-1) -- (1.5,5.7);
\node [below] at (1.5,-1) {$N_0$};
\node at (1.5,5.65) {$\frown$};
\node at (1.5,1.4) {-};
\node at (1.5,3.8) {-};
\node [above left] at (1.5,4.4) {$\nu_0$};
\node at (1.5,4.4) {$\bullet$};

\draw [->] (1.7,1.4)--(2.8,1.4);
\draw [->] (1.7,3.8)--(2.8,3.8);
\draw [->] (-2.8,2.5)--(2.8,5);

\node [right] at (3,.9) {$\mu$};
\node at (3,.9) {-};
\node [right] at (3,1.4) {$\kappa' \in D$};
\node at (3,1.4) {-};
\node [right] at (3,2.2) {$\lambda$};
\node at (3,2.2) {-};
\node [right] at (3,3.8) {$\delta$};
\node at (3,3.8) {-};
\node [right] at (3,5) {$\nu$};
\node at (3,5) {-};

\draw (3,-1)--(3,6);
\node [below] at (3,-1) {$N$};
\node at (3,5.95) {$\frown$};

\draw [->, thick] (-2.7,-1.3)--(-0.3,-1.3);
\draw [->, thick] (0.3, -1.3)--(1.2,-1.3);
\draw [->, thick] (1.8, -1.3)--(2.7,-1.3);
\draw[->, thick] (-2.8,-1.4) to  [out=-30, in=-145] node[above]{$\sigma_0$} (1.3,-1.5);
\draw[->,thick] (-2.9,-1.5) to [out = -50, in =-140] node[above]{$\sigma$} (2.9, -1.5);

\node [above] at (-1.5,-1.3) {$\sigma_1$};
\node [above] at (0.75,-1.3) {$k_1$};
\node [above] at (2.25, -1.3) {$k_0$};
\end{tikzpicture}
\caption{Diagram depicting the liftups involved with Corollary \ref{corollary:SCaboveMu}.}\label{figure:SCaboveMu}
\end{figure}
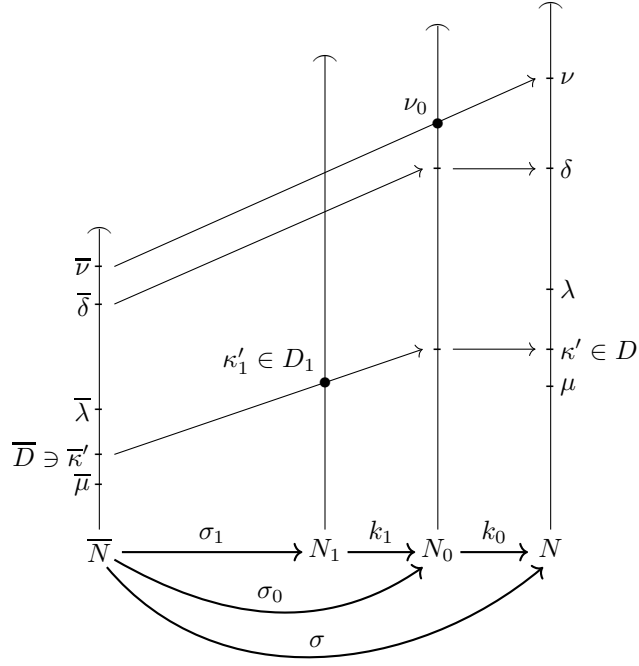 

Here we replace $\kappa(0)$ with some $\kappa' \in D$ such that $\mu < \kappa' $, where there are finitely many measurables of $D$ below $\kappa'$. 
So in particular, we let $\langle N_1, \sigma_1 \rangle$ be the liftup of $\langle \N, \sigma \rest H_{\overline{\kappa'}}^\N\rangle$ in Claim \ref{claim:N0isliftupofN}. In order to show the that we have a generic sequence over $\D_1$ as in Claim \ref{Claim:S_1isPrikryoverN_1}, we follow the same argument as follows:

Let $\mathcal X = \seq{ X_\kappa \in U_1(\kappa) }{ \kappa \in \sigma_1``\overline D }$, with $\mathcal X \in N_1$, be a sequence of measure one sets in the sequence of measures $\U_1$ with only coordinates coming from $\sigma_1``\overline D$.
We need to see that 
$\set{ \overline \kappa \in \overline D}{\sigma_1(\S(\overline \kappa)) \notin X_{\sigma_1(\overline \kappa)} }  \text{ is finite.}$
By the $\overline{\kappa}'$-cofinality of $\sigma_1$, there is some $w \in \N$ such that $\mathcal X \in \sigma_1(w)$, where $|w| < \overline{\kappa}'$ in $\N$. Thus in $N_1$, $|\sigma_1(w)| < \kappa_1'$. 
For each $\kappa \in \sigma_1``\overline D$, we have $$\textstyle X_\kappa \in \sigma_1(w)_\kappa = \set{\sigma_1(f)(\kappa) }{ f \in \prod_{i \in \overline D} \overline U(i) \ \land \ f \in w }$$ and also $|\sigma_1(w)|<\kappa_1'.$ So for all but finitely many $\kappa \in \sigma_1``\overline D$, namely for $\kappa \geq \kappa_1'$, by the $\kappa$-completeness of $U_1(\kappa)$, we have that $\bigcap \sigma_1(w)=W_\kappa \in U_1(\kappa).$

So we have established that $\mathcal W$, the sequence of $W_\kappa$ for $\kappa>\kappa_1'$, is also a sequence of measure-one sets in $N_1$. Note in addition that for $\kappa \in \sigma_1``\overline D$, $\kappa> \kappa_1'$, we have that $W_\kappa \subseteq X_\kappa$. 

By elementarity, for each $\overline \kappa \in \overline D$ with $\overline \kappa > \overline \kappa'$, $$\overline W_{\overline \kappa} =\cap \set{f(\overline \kappa) }{ f \in \textstyle\prod_{i \in \overline D} \overline U(i) \ \land \ f \in w }$$ is a measure-one set in $\overline U(\overline \kappa)$ and we also have that $\sigma_1(\overline W_{\overline \kappa}) = W_{\sigma_1(\overline \kappa)}$. Moreover, 
	$\set{\overline \kappa \in \overline D}{\overline S(\overline \kappa) \notin \overline W_{\overline \kappa}} \text{ is finite}$ by the generalized diagonal Prikry genericity criterion for $\overline{\D}$, which must be satisfied by $\overline S$, and since there are only finitely many measurables in $\overline D$ less than $\overline \kappa'$ in $\N$.
Thus by elementarity,
	$$\set{ \overline \kappa \in \overline D }{ \sigma_1(\overline S(\overline \kappa)) \notin W_{\sigma_1(\overline \kappa)} } \supseteq \set{ \overline \kappa \in \overline D }{ \sigma_1(\overline S(\overline \kappa) \notin X_{\sigma_1(\overline \kappa)}} \text{ is finite.}$$

Additionally the $\in$-theories $\mathcal L$ and $\mathcal L'$ would have to be defined so as to include as an axiom that $\mathring \sigma \rest \overline{\underline{\mu}} = \underline{\sigma} \rest \overline{\underline \mu}$ and $\dot \sigma \rest \overline{\underline{\mu}} = \underline{\sigma} \rest \overline{\underline \mu}$ respectively. 
We would then need to show that $\overset{*}\sigma \rest \overline{\mu} = \sigma \rest \overline{\mu}$, but this would follow since $k_0$ is the identity on $\nu_0$. 
Furthermore, it would need to be shown that $\sigma' \rest \overline{\mu} = \sigma \rest \overline{\mu}$, but this would follow from the requirement that $\widetilde \sigma' \rest \overline{\mu} = \underline{\sigma}^{\widetilde{\mathfrak A}} \rest \overline{\mu}$, and the fact that such ordinals are computed properly by $\widetilde{\mathfrak A}$. 
\end{proof}

\bibliographystyle{alpha}
\bibliography{../ApplicationStuff/CV-RS-Pubs-Refs-extra/BIB}
\end{document}